\renewcommand{\vec}[1]{\boldsymbol{#1}} 
\definecolor{MyCyan}{HTML}{00F9DE}
\long\def\@savemarbox#1#2{\global\setbox#1\vtop{\hsize\marginparwidth 
  \@parboxrestore\tiny\raggedright #2}}
\newcommand{\RR}{\mathbb{R}}  
\newcommand{\ZZ}{\mathbb{Z}}  
\newcommand{\TT}{\mathbb{T}}
\newcommand{\calP}{\mathcal{P}}
\newcommand{\calL}{\mathcal{L}}
\renewcommand{\SS}{\mathbb{S}}
\renewcommand{\setminus}{{\smallsetminus}}
\newcommand{\from}{\colon\thinspace} 
\newtheorem{theorem}{Theorem}[section]
\newtheorem*{theorem*}{Theorem}
\newtheorem{proposition}[theorem]{Proposition}
\newtheorem{lemma}[theorem]{Lemma}
\newtheorem*{namedtheorem}{\theoremname}
\newcommand{\theoremname}{testing}
\newenvironment{named}[1]{\renewcommand{\theoremname}{#1}\begin{namedtheorem}}{\end{namedtheorem}}
\theoremstyle{definition}
\newtheorem{definition}[theorem]{Definition}
\newtheorem{remark}[theorem]{Remark}
\newcommand{\refthm}[1]{Theorem~\ref{Thm:#1}}
\newcommand{\reflem}[1]{Lemma~\ref{Lem:#1}}
\newcommand{\refprop}[1]{Proposition~\ref{Prop:#1}}
\newcommand{\refdef}[1]{Definition~\ref{Def:#1}}
\newcommand{\reffig}[1]{Figure~\ref{Fig:#1}}
\title{A geometric classification of rod complements in the $3$-torus} 
\author{Connie On Yu Hui} 
\address[]{School of Mathematics, Monash University, VIC 3800, Australia } 
\email[]{onyu.hui@monash.edu}
\begin{document} 

\begin{abstract} 
Rod packings are used in crystallography to describe crystal structures with linear or zigzag chains of particles, and each rod packing can be topologically viewed as a collection of disjoint geodesics in the $3$-torus. Hui and Purcell developed a method to study the complements of rods in the $3$-torus with the use of $3$-dimensional geometry and tools from the $3$-sphere, and they partially classified the geometry of some families of rod complements in the $3$-torus.  In this paper, we provide a complete classification of the geometry of all rod complements in the $3$-torus using topological arguments.  
\end{abstract} 

\maketitle

\section{Introduction} 
From crystallography, a rod packing is a packing of uniform cylinders, which represent linear or zigzag chains of particles. In 1977, O'Keeffe and Andersson~\cite{OKeeffe-Andersson:RodPCrystalChem} explained that rod packings can be used to succinctly describe some crystal structures, including those that are common and cannot be easily illustrated by other descriptions. Such structures admit $3$-dimensional translational symmetry and therefore can be mathematically described as a link in the $3$-torus. 

In 2001, O'Keeffe \emph{et~al}~\cite{OKeeffeEtAl:CubicRodPackings} described six possible rod packings that can be realized by existing crystalline materials. Rod packings have since appeared in publications from material science \cite{ERH:PeriodicEntanglementII,RosiEtAl:RodPackingsMOF} and biological science~\cite{EH:SwollenCorneocytes}. 

In \cite{HuiPurcell}, Hui and Purcell introduced topology and hyperbolic geometry to the classification of rod packings. They considered certain rod packings as links in the $3$-sphere, and made use of topological, geometric, and computational tools from the study of links in the $3$-sphere to identify hyperbolic and non-hyperbolic families of rod packings. In particular, they showed that five of the six rod packings of O'Keeffe \emph{et~al} in \cite{OKeeffeEtAl:CubicRodPackings} each admits a complete hyperbolic structure. 

In this paper, we expand that work. We provide topological arguments that characterise the geometry of rod complements without using tools from the $3$-sphere. The following is one of the main results. 

\begin{named}{\refthm{MainThreeOrMoreRods}}
Suppose $R_1$, $R_2$, \ldots, $R_n$ are $n$ disjoint rod-shaped circles embedded in $\TT^3$ for some integer $n \geq 3$. 
The $3$-manifold $\TT^3\setminus (R_1\cup R_2 \cup \ldots \cup R_n)$ admits a complete hyperbolic structure if and only if both of the following conditions hold: 
\begin{enumerate}
    \item Three of the rods are linearly independent.
    \item No pair of distinct parallel rods are linearly isotopic in the complement of the other rods in $\TT^3$.  
\end{enumerate} 
\end{named}

We then classify all geodesic links in the $3$-torus into hyperbolic and non-hyperbolic collections, and characterise Seifert fibred rod complements.  

\begin{named}{\refthm{Main2}}
Suppose $R_1, R_2, \ldots, R_n$ are disjoint rod(s) in $\TT^3$ for some positive integer $n$. The $3$-manifold $\TT^3\setminus (R_1\cup R_2\cup \ldots\cup R_n)$ 
\begin{enumerate}
    \item \label{condition:hypIff} admits a complete hyperbolic structure if and only if three of the rods are linearly independent, and no pair of distinct parallel rods are linearly isotopic in the complement of the other rods; and
    \item \label{condition:SFIff} is Seifert fibred if and only if $n=1$ or all rods are parallel; and 
    \item \label{condition:ToroidalIf} is toroidal if the following holds: 
    \begin{enumerate}
        \item all rod(s) span a line or plane; or 
        \item there exist distinct integers $k,l\in\{1,2,\ldots,n\}$ such that $R_k$ and $R_l$ are linearly isotopic in the complement of the other rods. 
    \end{enumerate} 
\end{enumerate} 
\end{named}

\refthm{MainThreeOrMoreRods} and \refthm{Main2} are related to other research works. Recall that rod packings can be viewed as a $3$-periodic link, which is a link with $3$-dimensional translational symmetry. Others have considered 2-periodic links, which are links with $2$-dimensional translational symmetry, or links in the thickened torus. Akimova and Matveev~\cite{Akimova-Matveev:VirtualKnots} tabulate such knots. Champanerkar, Kofman, and Purcell~\cite{CKP:Biperiodic} study the geometry of their complements when the diagrams are alternating. Adams~\emph{et~al}~\cite{AdamsEtAl:LinksInThickenedS} also study the geometry of alternating $2$-periodic links.  

There is also work on the hyperbolic geometry of links in $3$-manifolds other than the $3$-sphere, including the study of virtual links \cite{AdamsEtAl:tgHypVirtualLinks}, and alternating links on other projection surfaces 
\cite{Howie-Purcell:AltLinksOnSurfaces}. These works require the notion of a diagram, which is not required in this paper. 

The $3$-torus is Seifert fibred and any one-rod complement is also Seifert fibred. Cremaschi and Rodr\'{\i}guez-Migueles \cite{Cremaschi-RodriguezMigueles:HypOfLinkCpmInSFSpaces} consider complements of links in Seifert fibred spaces with hyperbolic $2$-orbifold as base space and present sufficient conditions that guarantee their hyperbolicity. See also \cite{Cremaschi-RodriguezMigueles-Yarmola:VolAndFillingMulticurves}. Their work applies to more general Seifert fibred spaces but the sufficient conditions do not cover the cases when there are parallel geodesics. \refthm{MainThreeOrMoreRods} and \refthm{Main2} in this paper do cover such cases. 


\subsection{Acknowledgements}  
The author thanks Jessica Purcell for her encouragement and helpful suggestions on the draft. She thanks Alex He, Ioannis Iakovoglou, and Jos\'{e} Andr\'{e}s Rodr\'{\i}guez-Migueles for helpful conversations during the Dynamics, Foliation, and Geometry III conference held at the MATRIX research institute in Australia; and she thanks Stephan Tillmann for a helpful conversation during AustMS 2023. She also thanks the anonymous reviewer for the helpful suggestions and comments. 

\section{Preliminaries}\label{Sec:Prelim} 

In this paper, the \emph{$3$-torus} $\TT^3$ is viewed as the unit cube $[0,1]^3$ in $\mathbb{R}^3$ with opposite faces glued in the natural way unless otherwise specified. Let $S$ be a one- or two-dimensional submanifold embedded in a $3$-manifold $M$. The symbol $N(S)$ denotes an open tubular neighbourhood of $S$ embedded in $M$. Denote by $\mathcal{P}\from \RR^3  \to \TT^3$
the covering map $\mathcal{P}(x,y,z) \coloneqq ([x-\lfloor x \rfloor], [y - \lfloor y \rfloor], [z - \lfloor z \rfloor])$.  A vector $(x,y,z)$ in $\RR^3$ is called an \emph{integral vector} if and only if $x$, $y$, $z$ are all integers and $\{x,y,z\}\neq \{0\}$.  

\begin{definition} [Rod-shaped circle] \label{Def:Rodsss}  \ 

\begin{enumerate}
\item A \emph{rod} $R$ in the $3$-torus is the image set $\mathcal{P}(L)$ of a straight line $L$ in $\RR^3$. We call $R$ a \emph{rod-shaped circle} if the set $R=\mathcal{P}(L)$ is a circle embedded in $\TT^3$. By abuse of terminology, we sometimes call a rod-shaped circle simply by a \emph{rod} when the context is clear. 

\item A \emph{lift of a rod $R$} is a connected component of the pre-image set $\calP^{-1}(R)$ under the projection map $\calP\from \RR^3 \to \TT^3$.  

\item \label{Def:uvwRod}
 Let $u$, $v$, $w$ be real numbers, not all zero. Denote by $L$ a straight line in $\RR^3$ that has the same direction as the vector $(u,v,w)$. We call the rod $R=\calP(L)$ a \emph{$(u,v,w)$-rod}. Unless otherwise specified, if $(u,v,w)$ is an integral vector, we assume each $(u,v,w)$-rod is associated with the parametrization $R_e\from [0,1]\to \TT^3$ defined as 
 \[R_e\coloneqq \calP\circ R_\ell,\] 
 where $R_\ell\from [0,1]\to \RR^3$ is the linear map 
 \[R_\ell(t) \coloneqq (u_0, v_0, w_0) + t(u,v,w)\] 
 for some $(u_0, v_0, w_0) \in L$.
\end{enumerate}
\end{definition} 

\begin{definition} [Plane-shaped torus] \label{Def:PlaneTori}  \ 

\begin{enumerate}
\item A \emph{plane} $T$ in the $3$-torus is the image set $\mathcal{P}(\Pi)$ of a plane $\Pi$ in $\RR^3$. If the set $T=\mathcal{P}(\Pi)$ is a torus embedded in $\TT^3$, we call $T$ a \emph{plane-shaped torus}, or simply a \emph{plane torus}. 

\item A \emph{lift of a plane torus $T$} is a connected component of the pre-image set $\calP^{-1}(T)$ under the projection map $\calP\from \RR^3 \to \TT^3$.  

\item Let $(a,b,c)$ and $(u,v,w)$ be nonzero, nonparallel vectors in $\RR^3$. A \emph{plane  $T$ spanned by $(a,b,c)$ and $(u,v,w)$ in the $3$-torus} is a plane in $\TT^3$ such that the associated Euclidean plane $\Pi$ is spanned by the vectors $(a,b,c)$ and $(u,v,w)$. In case $(a,b,c)$ and $(u,v,w)$ are integral vectors, such a torus $T$ is associated with the parametrization 
$T_e\from [0,1]\times[0,1]\to \TT^3$ defined as 
\[T_e\coloneqq \calP\circ T_\ell,\] 
where $T_\ell\from [0,1]\times[0,1]\to \RR^3$ is the linear map 
\[T_\ell(s,t) \coloneqq (a_0, b_0, c_0) + s(a,b,c) + t(u,v,w)\] 
for some $(a_0, b_0, c_0) \in \Pi$. 

\end{enumerate}
\end{definition}

\begin{definition}
Let $R_1$, $R_2$, \ldots, $R_m$ be $m$ rod-shaped circles embedded in the $3$-torus.  For each $i\in\{1,2,\ldots,m\}$, denote by $(a_i,b_i,c_i)$ a vector in $\RR^3$ such that $R_i$ is an $(a_i,b_i,c_i)$-rod. We say the rods $R_1$, $R_2$,\ldots, $R_m$ are \emph{linearly independent} if and only if the set of vectors $\{(a_i,b_i,c_i): i \in [1,m]\cap\ZZ\}$ are linearly independent. Two rods $R_1$ and $R_2$ are said to be \emph{parallel} if and only if $(a_1, b_1, c_1)$ is a scalar multiple of $(a_2, b_2, c_2)$. 
\end{definition} 

\begin{definition}
Given a torus $\TT^2$ and a framing $f\from\TT^2 \to \mathbb{S}^1 \times \mathbb{S}^1 \cong ([0,1] /\sim) \times ([0,1] / \sim)$, we can define a covering map $\mathcal{P}:\mathbb{R}^2\xrightarrow{} \TT^2$ by $\mathcal{P}(x,y)= f^{-1}([x-\lfloor x \rfloor], [y - \lfloor y \rfloor])$. Let $p, q \in \mathbb{Z}$, not both zero. A \emph{$(p,q)$-curve in $\TT^2$} is the projection $\mathcal{P}(L)$, where $L$ is a line segment between $(0,0)$ and $(p,q)$, or any other line segment with the same length and slope.  A $(1,0)$-curve is called a \emph{meridian} of the torus $\TT^2$. A $(0,1)$-curve is called a \emph{longitude} of the torus $\TT^2$.  

Let $p$, $q$ be integers, not both zero. 
A \emph{$(p,q)$-curve of a rod-shaped circle $R$} in $\TT^3$ is a $(p,q)$-curve in $\partial \overline{N(R)}$ with the natural framing. 
\end{definition}  

Recall the lemma from \cite{HuiPurcell} that characterises a rod-shaped circle in $\TT^3$:  

\begin{lemma} \label{Lem:characteriseSSC}  
Let $(a,b,c)\in \ZZ^3\setminus\{(0,0,0)\}$. The parametrization $R_e\from [0,1]\to\TT^3$ corresponding to the $(a,b,c)$-rod $R$ represents a simple closed curve in $\TT^3$ if and only if $\gcd(a,b,c)=1$. 
\end{lemma}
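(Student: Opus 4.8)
The plan is to work with the lift $L = \{(u_0,v_0,w_0) + t(a,b,c) : t \in \RR\}$ in $\RR^3$ and analyze when the projection $\calP|_L$ closes up into an embedded circle. First I would observe that $R_e$ represents a simple closed curve in $\TT^3$ precisely when $R_e(0) = R_e(1)$ and $R_e$ is injective on $[0,1)$; since $R_e = \calP \circ R_\ell$ with $R_\ell$ affine of direction $(a,b,c)$, the condition $R_e(0)=R_e(1)$ is automatic (the displacement vector $(a,b,c)$ is integral, hence in the deck group $\ZZ^3$). So the only content is injectivity on $[0,1)$: I need that there is no $t \in (0,1)$ with $R_\ell(t) - R_\ell(0) = t(a,b,c) \in \ZZ^3$.

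The key step is the number-theoretic translation: $t(a,b,c) \in \ZZ^3$ for some $t \in (0,1)$ if and only if there is a rational $t = p/q$ in lowest terms with $q \geq 2$ such that $q \mid a$, $q \mid b$, $q \mid c$, i.e. $q \mid \gcd(a,b,c)$. Thus a self-intersection exists on $[0,1)$ iff $\gcd(a,b,c) \geq 2$, which gives the "if" direction of the contrapositive: if $\gcd(a,b,c) \neq 1$ then $R_e$ is not injective. Conversely, if $\gcd(a,b,c) = 1$, then $t(a,b,c) \in \ZZ^3$ forces $t \in \ZZ$, so the only coincidences happen at the endpoints, and $R_e$ is an embedded circle. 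I should also note that $R$ is genuinely one-dimensional (the direction vector is nonzero by hypothesis), so the image is not a point.

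The main obstacle — really the only place needing care — is handling the parametrization/framing bookkeeping cleanly: confirming that "represents a simple closed curve" is insensitive to the choice of basepoint $(u_0,v_0,w_0) \in L$, and that periodicity of $\calP \circ R_\ell$ with minimal period dividing $1$ is exactly what embeddedness of the loop on $[0,1]$ means. I expect this to be short. One could alternatively phrase the whole argument via the subgroup $\ZZ(a,b,c) \subseteq \ZZ^3$ and the index of its saturation, but the direct divisibility argument above is the most transparent. Since this lemma is quoted from \cite{HuiPurcell}, I would keep the proof brief and refer there for details if needed.
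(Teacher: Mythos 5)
Your argument is correct, and it is the standard one: the paper itself states this lemma without proof (it is quoted from \cite{HuiPurcell}), so there is no in-paper argument to compare against. Your reduction to the divisibility question --- a self-intersection on $[0,1)$ occurs iff some $\tau\in(0,1)$ satisfies $\tau(a,b,c)\in\ZZ^3$, which (writing $\tau=p/q$ in lowest terms) happens iff some $q\geq 2$ divides $\gcd(a,b,c)$ --- is exactly the right content, and your handling of closure, basepoint-independence, and nondegeneracy covers the remaining bookkeeping.
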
  

Next, we present the following lemma that characterises a plane-shaped torus in $\TT^3$. 

\begin{lemma} \label{Lem:characterisePlaneTorusAsSet}
A plane $P$ in $\TT^3$ is a plane torus if and only if $P$ is spanned by two nonparallel integral vectors in $\TT^3$. 
\end{lemma}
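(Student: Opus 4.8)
## Proof Proposal for Lemma~\ref{Lem:characterisePlaneTorusAsSet}

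The plan is to prove both directions by passing to the covering space $\calP\colon \RR^3 \to \TT^3$ and analysing how the Euclidean plane $\Pi$ projects down. For the ``if'' direction, suppose $P = \calP(\Pi)$ where $\Pi$ is spanned by two nonparallel integral vectors $(a,b,c)$ and $(u,v,w)$. After clearing common factors we may assume each spanning vector is primitive, and in fact I would first reduce to a convenient basis: since $\Pi$ passes through a rational point (a suitable translate does), the lattice $\ZZ^3 \cap \Pi$ is a rank-$2$ sublattice of $\ZZ^3$, and I can choose a basis $e_1, e_2$ of this lattice which extends to a basis $e_1, e_2, e_3$ of $\ZZ^3$ (a standard fact about primitive sublattices, via Smith normal form or the fact that $\ZZ^3 / (\ZZ^3\cap\Pi)$ is torsion-free hence free). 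Applying the corresponding element of $\mathrm{GL}_3(\ZZ)$, which descends to a self-homeomorphism of $\TT^3$, I reduce to the model case $\Pi = \{z = z_0\}$ for the standard coordinate plane, whose image is visibly the plane torus $\TT^2 \times \{[z_0]\}$. This proves $P$ is a plane torus. I should also note that the stabiliser argument shows $P$ is embedded: the deck transformations preserving $\Pi$ are exactly translations by $\ZZ^3 \cap \Pi$, so $\calP|_\Pi$ is a covering onto its image with the torus $\Pi/(\ZZ^3\cap\Pi)$ as domain, hence a homeomorphism onto $P$.

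For the ``only if'' direction, suppose $P = \calP(\Pi)$ is a plane torus, and let $\Gamma \leq \ZZ^3$ be the group of integral translations preserving $\Pi$ (equivalently, the integral vectors parallel to $\Pi$, together with $0$). Then $\calP|_\Pi$ factors through $\Pi / \Gamma$, and the restriction $\Pi/\Gamma \to P$ is a covering map. Since $P$ is a torus, $\Pi/\Gamma$ is a covering of a torus by a flat surface, so $\Pi/\Gamma$ is either a plane, a cylinder, or a torus, according as $\mathrm{rank}(\Gamma) = 0, 1, 2$. If $\mathrm{rank}(\Gamma) \leq 1$ then $\Pi/\Gamma$ is noncompact and admits no finite-sheeted cover to a compact surface, but also one needs $\calP|_\Pi$ to have image a \emph{closed} subset that is a submanifold; the key point is that if $\Gamma$ has rank $0$ or $1$, the image $\calP(\Pi)$ is not closed in $\TT^3$ (an irrational plane or an irrationally-sloped cylinder is dense in its closure, which is all of $\TT^3$ or a $3$-dimensional set), contradicting that $P$ is an embedded torus (which is compact, hence closed). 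Therefore $\mathrm{rank}(\Gamma) = 2$, which exactly says $\Pi$ contains two linearly independent integral vectors, i.e. $P$ is spanned by two nonparallel integral vectors in $\TT^3$.

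The main obstacle I anticipate is making the density/non-closedness argument in the ``only if'' direction fully rigorous: one must show precisely that when $\Pi$ contains at most a rank-$1$ lattice of integral vectors, the projection $\calP(\Pi)$ fails to be a closed embedded torus. The cleanest route is probably to invoke Kronecker's theorem on equidistribution — the closure of $\calP(\Pi)$ is a subtorus (or all of $\TT^3$) whose dimension equals the rank of the smallest rational subspace containing the direction space of $\Pi$; when that rank is $3$ the closure is everything, and when it is exactly $2$ but $\Pi$'s direction space is not itself rational, $\calP(\Pi)$ is a dense proper subset of a plane torus, so not closed. I would handle the rank-$2$-direction-but-irrational subtlety by noting that $\Pi$ being a rational plane (direction space defined over $\QQ$) is equivalent to $\Gamma$ having rank $2$, so there is no gap. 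An alternative, more self-contained approach avoiding Kronecker: argue directly that an embedded torus $P$ in $\TT^3$ is $\pi_1$-injective or compressible, use the classification of incompressible surfaces in $\TT^3$ (every incompressible torus is isotopic to a linear/plane one) together with the observation that a compressible torus bounds a solid torus or lies in a ball — but this is heavier machinery than the covering-space argument and I would only fall back on it if the equidistribution bookkeeping becomes unwieldy.
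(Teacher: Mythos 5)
Your proof is correct, but it takes a genuinely different route from the paper's. For the ``if'' direction the paper works directly inside the plane: after reducing the spanning vectors to primitive ones, it exhibits two rod-shaped circles lying in $P$ and, via a case analysis on how translates of one positioned vector meet the interior of the other, produces a pair of such circles intersecting exactly once, so that $P\cong\SS^1\times\SS^1$. You instead observe that $\ZZ^3\cap\Pi$ is a primitive rank-$2$ sublattice, extend a basis of it to a basis of $\ZZ^3$, and use the resulting element of $\operatorname{GL}_3(\ZZ)$ to reduce to a coordinate plane; this is essentially the mechanism the paper reserves for the parametrized version in \reflem{characterisePlaneTorus}, and your stabiliser computation showing $\Pi/(\ZZ^3\cap\Pi)\to P$ is a homeomorphism delivers embeddedness more directly than the paper's case analysis does. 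For the ``only if'' direction the paper argues briefly that a plane torus is the product of two rod-shaped circles generating $\pi_1(P)$, each parallel to an integral vector; your argument via the rank of the stabiliser lattice $\Gamma$, together with the observation that a plane with irrational direction space projects to a dense and hence non-closed subset while an embedded torus is compact and hence closed, is more careful and fills in a step the paper leaves implicit, at the price of invoking Kronecker's theorem. One small caution: your sentence asserting that $\Pi/\Gamma\to P$ is a covering map is premature when $\operatorname{rank}(\Gamma)\le 1$, since the map is then only a continuous bijection onto a non-closed image rather than a covering onto $P$ with its subspace topology; but your actual contradiction comes from closedness rather than from that claim, so the argument is unaffected.
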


\begin{proof} 
Suppose the plane $P$ is spanned by two nonparallel integral vectors $(a,b,c)$ and $(u,v,w)$ in $\TT^3$. Without loss of generality, we assume $\gcd(a,b,c)=1$ and $\gcd(u,v,w)=1$. 

If we view the plane $P$ as a set, we have $P$ contains an $(a,b,c)$-rod and a $(u,v,w)$-rod. By \reflem{characteriseSSC}, these rods are simple closed curves in $\TT^3$. 

Next, we consider lattices of vectors in $\RR^3$. Denote by $(a,b,c)_{\vec{p}}$ a copy of the vector $(a,b,c)$  with vector tail sitting at some point $\vec{p}\in \ZZ^3 \subseteq \RR^3$.  Denote the lattice of each spanning vector by $\calL_1\coloneqq \{(a,b,c)_{\vec{p}}: \vec{p}\in \ZZ^3 \}$ and $\calL_2\coloneqq\{(u,v,w)_{\vec{q}}: \vec{q}\in \ZZ^3 \}$ respectively. (Since the intersection pattern is the same in each unit cube with integral vertices, it suffices to consider $\vec{p}=(0,0,0)$.) 

Case 1: Suppose the vector $(a,b,c)_{\vec{p}}$ does not intersect any $(u,v,w)_{\vec{q}}\in \calL_2$ except at tip or tail. Then $P$ is equal to the product between an $(a,b,c)$-rod and a $(u,v,w)$-rod. In other words, $P$ is homeomorphic to $\SS^1\times \SS^1$. Thus, $P$ is a plane torus.  

Case 2: Suppose an interior point $\vec{r}$ of the vector $(a,b,c)_{\vec{p}}$ intersects some $(u,v,w)_{\vec{q}}\in \calL_2$ (see \reffig{GridAndArrows} for example). Observe that $\vec{p}\neq \vec{q}$ and the two positioned vectors $(a,b,c)_{\vec{p}}$ and $(a,b,c)_{\vec{q}}$ lie in the same plane $\Pi$ parallel to any lift of $P$. The plane $\Pi$ in $\RR^3$ contains more than one positioned vector in $\calL_1$; we can thus find an integral vector $(x,y,z)$ with tail positioned at $\vec{p}$ and tip pointing at the tail of a neighbouring positioned vector $(a,b,c)_{\vec{\hat{p}}}$ in $\Pi$. Note that the integral vector $(x,y,z)$ does not intersect any positioned vector in $\calL_1$ except tip and tail. As a result, the plane $P$, which has lift parallel to $\Pi$, contains an $(a,b,c)$-rod and an $(x,y,z)$-rod which are linearly independent and intersect only once.  Hence $P$ is homeomorphic to $\SS^1\times\SS^1$. 

In any case, $P$ is a plane torus. 

Now suppose the plane $P$ in $\TT^3$ is a plane torus. 
Note that $P$ is homeomorphic to a torus $\SS^1\times\SS^1$, it is the product of two rod-shaped circles $R_1$ and $R_2$ which represent two distinct generators in $\pi_1(P)\cong\ZZ\times\ZZ$. Observe that any rod-shaped circle is parallel to some integral vector. Denote by $\vec{v_1}$ and $\vec{v_2}$ the integral vectors that $R_1$ and $R_2$ are respectively parallel to. Since $R_1$ and $R_2$ are nonparallel, $\vec{v_1}$ and $\vec{v_2}$ are nonparallel. Therefore, $P$ is spanned by two nonparallel integral vectors. 
\end{proof} 

\begin{figure} 
\begingroup%
  \makeatletter%
  \providecommand\color[2][]{%
    \errmessage{(Inkscape) Color is used for the text in Inkscape, but the package 'color.sty' is not loaded}%
    \renewcommand\color[2][]{}%
  }%
  \providecommand\transparent[1]{%
    \errmessage{(Inkscape) Transparency is used (non-zero) for the text in Inkscape, but the package 'transparent.sty' is not loaded}%
    \renewcommand\transparent[1]{}%
  }%
  \providecommand\rotatebox[2]{#2}%
  \newcommand*\fsize{\dimexpr\f@size pt\relax}%
  \newcommand*\lineheight[1]{\fontsize{\fsize}{#1\fsize}\selectfont}%
  \ifx\svgwidth\undefined%
    \setlength{\unitlength}{219.99999685bp}%
    \ifx\svgscale\undefined%
      \relax%
    \else%
      \setlength{\unitlength}{\unitlength * \real{\svgscale}}%
    \fi%
  \else%
    \setlength{\unitlength}{\svgwidth}%
  \fi%
  \global\let\svgwidth\undefined%
  \global\let\svgscale\undefined%
  \makeatother%
  \begin{picture}(1,0.27805076)%
    \lineheight{1}%
    \setlength\tabcolsep{0pt}%
    \put(0,0){\includegraphics[width=\unitlength,page=1]{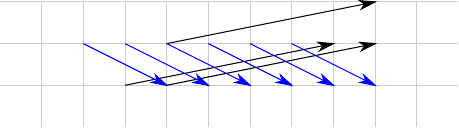}}%
    \put(0.25507759,0.04542828){\makebox(0,0)[lt]{\lineheight{1.25}\smash{\begin{tabular}[t]{l}$p$\end{tabular}}}}%
    \put(0.35053086,0.03861009){\makebox(0,0)[lt]{\lineheight{1.25}\smash{\begin{tabular}[t]{l}$\hat{p}$\end{tabular}}}}%
    \put(0.34258228,0.20150037){\color[rgb]{0,0,1}\makebox(0,0)[lt]{\lineheight{1.25}\smash{\begin{tabular}[t]{l}$q$\end{tabular}}}}%
    \put(0,0){\includegraphics[width=\unitlength,page=2]{GridAndArrows.pdf}}%
  \end{picture}%
\endgroup%
  
\caption{A particular example illustrating Case 2 in the proof of \reflem{characterisePlaneTorusAsSet}. }
\label{Fig:GridAndArrows} 
\end{figure}  

\reflem{characterisePlaneTorusAsSet} tells us exactly when a plane in the $3$-torus is a plane torus. Observe that every plane torus, as a set, is a torus embedded in the $3$-torus.

If we consider more specifically the associated parametrizations of the torus, the following lemma states the condition that characterises a parametrization representing a torus embedded in the $3$-torus.  

\begin{lemma} \label{Lem:characterisePlaneTorus}
Let $v_1 = (a,b,c)$, $v_2 = (m,n,r)\in \ZZ^3\setminus\{(0,0,0)\}$. Suppose $v_1$ and $v_2$ are linearly independent. The parametrization $T_e\from [0,1]\times[0,1]\to \TT^3$ associated to the plane $T$ spanned by $v_1$ and $v_2$ represents an embedded plane torus in $\TT^3$ if and only if \[\gcd\bigg(\mdet{a & m \\ b & n}, \mdet{b & n \\ c & r}, \mdet{a & m \\ c & r}\bigg)=1.\] 
\end{lemma}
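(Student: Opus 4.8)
The plan is to decide when the parametrization $T_e = \calP\circ T_\ell$ ``represents an embedded plane torus''. Write $p_0 = (a_0,b_0,c_0)$, $v_1 = (a,b,c)$, $v_2 = (m,n,r)$, and $W = \mathrm{span}_{\RR}(v_1,v_2)$, so that extending $T_\ell$ linearly over all of $\RR^2$ produces the affine plane $\Pi = p_0 + W$. Since $v_1$ and $v_2$ are nonparallel integral vectors, \reflem{characterisePlaneTorusAsSet} already tells us that the image $\calP(\Pi)$ is a plane torus, so the only issue is whether $T_e$ descends to an \emph{injective} map on the domain torus $\TT^2 = \RR^2/\ZZ^2$. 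First I would identify the lattices involved. The map $T_\ell$ is an affine isomorphism $\RR^2\to\Pi$ that is $\ZZ^2$-equivariant, carrying translation on $\RR^2$ to translation by $\ZZ v_1 + \ZZ v_2$ on $\Pi$; hence it induces a homeomorphism $\TT^2\to\Pi/(\ZZ v_1 + \ZZ v_2)$. On the other hand, the subgroup of the deck group $\ZZ^3$ of $\calP$ that preserves $\Pi$ is exactly $L \coloneqq \ZZ^3\cap W$ (a translation by $t\in\ZZ^3$ sends $\Pi$ to $\Pi$ iff $t\in W$), so $\calP$ restricted to $\Pi$ is the quotient $\Pi\to\Pi/L = \calP(\Pi)$. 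Therefore $T_e$ is the composite $\TT^2 \xrightarrow{\ \sim\ } \Pi/(\ZZ v_1 + \ZZ v_2) \twoheadrightarrow \Pi/L$, a covering map of degree $[L : \ZZ v_1 + \ZZ v_2]$; this index is finite precisely because the hypothesis that $v_1$ and $v_2$ are linearly independent makes both $\ZZ v_1 + \ZZ v_2$ and $L$ lattices of rank $2$ in the plane $W$. Since $\TT^2$ is compact, $T_e$ represents an embedded plane torus if and only if this degree equals $1$, i.e. if and only if $\ZZ v_1 + \ZZ v_2 = \ZZ^3\cap W$.

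It then remains to show that $\ZZ v_1 + \ZZ v_2 = \ZZ^3\cap W$ if and only if $g = 1$, where $g$ denotes the greatest common divisor of the three $2\times 2$ minors in the statement. The key observation is that, up to signs, these three minors are the coordinates of the cross product $v_1\times v_2$, equivalently the integers $\det(e_i,v_1,v_2)$ (the $3\times 3$ determinant with columns $e_i,v_1,v_2$) for $i=1,2,3$; thus $g = 1$ says exactly that $v_1\times v_2$ is a primitive integer vector. For the ``if'' direction I would take $x\in\ZZ^3\cap W$, write $x = \alpha v_1 + \beta v_2$ with $\alpha,\beta\in\RR$, and note that $x\times v_2 = \alpha\,(v_1\times v_2)$ and $v_1\times x = \beta\,(v_1\times v_2)$ are both integer vectors, so primitivity of $v_1\times v_2$ forces $\alpha,\beta\in\ZZ$, i.e. $x\in\ZZ v_1+\ZZ v_2$; the reverse inclusion is immediate. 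For the ``only if'' direction I would use that $L = \ZZ^3\cap W$ is a \emph{primitive} sublattice of $\ZZ^3$ (if $kx\in L$ for some $x\in\ZZ^3$ and nonzero integer $k$, then $x\in W$ and so $x\in L$); hence $\ZZ^3/L$ is free of rank $1$ and there is $z\in\ZZ^3$ with $\ZZ^3 = \ZZ z\oplus L$. If moreover $\{v_1,v_2\}$ generated $L$, then $\{z,v_1,v_2\}$ would be a $\ZZ$-basis of $\ZZ^3$, forcing $\det(z,v_1,v_2) = \pm 1$; but $\det(z,v_1,v_2) = \langle z, v_1\times v_2\rangle$ is a $\ZZ$-linear combination of the three minors, hence divisible by $g$, so $g = 1$.

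I expect the main obstacle to be the bookkeeping in the first paragraph: pinning down the stabilizer of $\Pi$ in the deck group, checking that $T_e$ really is the stated composite covering, and thereby making ``represents an embedded plane torus'' cleanly equivalent to ``$[L:\ZZ v_1+\ZZ v_2]=1$''. (One should also record the small point that an injective immersion of the compact surface $\TT^2$ into $\TT^3$ is automatically an embedding, so that injectivity of the descended map suffices.) The lattice argument of the second paragraph is routine once it has been isolated, but the ``only if'' direction should be carried out via the complementary generator $z$ as above rather than by quoting a covolume formula for $\ZZ^3\cap W$, since that formula is essentially the identity $[\ZZ^3\cap W : \ZZ v_1 + \ZZ v_2] = g$ that is being proved.
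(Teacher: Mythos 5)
Your proof is correct, and it takes a genuinely different route from the paper's in several places. The paper argues both directions by completing $\{v_1,v_2\}$ to a triple $\{v_1,v_2,v_3\}$ with $\det(v_1,v_2,v_3)=\pm1$: for the forward direction it asserts that an embedded $T_e$ forces the existence of such a $v_3$ spanning a fundamental region of $\TT^3$ and then reads off the gcd condition from the cofactor expansion, and for the converse it produces $v_3$ by B\'ezout's lemma and transports the standard coordinate torus onto $T$ via the homeomorphism of $\TT^3$ induced by the resulting $\operatorname{GL}_3(\ZZ)$ matrix. You instead isolate the clean intermediate equivalence that $T_e$ is an embedding if and only if $\ZZ v_1+\ZZ v_2 = \ZZ^3\cap W$, by exhibiting $T_e$ as a homeomorphism followed by a covering of degree $[\ZZ^3\cap W : \ZZ v_1+\ZZ v_2]$, and then prove that this lattice equality is equivalent to primitivity of $v_1\times v_2$. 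Your ``only if'' half is essentially the paper's forward direction made precise: the paper's claim that the required $v_3$ exists is justified only by the remark that neighbouring lifts are a positive distance apart, which by itself shows that $\ZZ^3/(\ZZ^3\cap W)$ is generated by a single $v_3$ but not that $\ZZ v_1+\ZZ v_2$ is all of $\ZZ^3\cap W$; your covering-degree reduction supplies exactly that missing saturation statement. Your ``if'' half, via the identities $x\times v_2=\alpha\,(v_1\times v_2)$ and $v_1\times x=\beta\,(v_1\times v_2)$, avoids the appeal to $\operatorname{MCG}(\TT^3)\cong\operatorname{GL}_3(\ZZ)$ altogether. What the paper's converse buys is brevity; what yours buys is a self-contained lattice-theoretic argument and the sharper quantitative statement that when embeddedness fails, $T_e$ is a covering of the underlying plane torus whose degree is precisely the gcd in question.
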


\begin{proof}
Suppose $T_e$ represents an embedded plane torus in $\TT^3$.  Then $T_e|_{[0,1]\times\{0\}}$ and $T_e|_{\{0\}\times[0,1]}$ represent two simple closed curves. As the distance between two neighbouring lifts of $T_e([0,1]\times[0,1])$ is nonzero, there exists a nonzero vector $v_3 = (\alpha, \beta, \gamma) \in\ZZ^3$ such that $v_1$, $v_2$, $v_3$ bound a fundamental region of the $3$-torus. Note that $\operatorname{MCG}(\TT^3) \cong \operatorname{GL}_3(\ZZ)$. We thus have 
\begin{align*}
\det(v_1^\top, v_2^\top, v_3^\top) = \pm 1, \textrm{ or } \alpha \mdet{b & n \\ c & r} - \beta \mdet{a & m \\ c & r} + \gamma \mdet{a & m \\ b & n} = \pm 1. 
\end{align*} 

The last equality implies that 
\[\gcd\bigg(\mdet{a & m \\ b & n}, \mdet{b & n \\ c & r}, \mdet{a & m \\ c & r}\bigg)=1.\]

Next, we show the converse. Suppose 
$\gcd\bigg(\mdet{a & m \\ b & n}, \mdet{b & n \\ c & r}, \mdet{a & m \\ c & r}\bigg)=1.$ 
By the generalised B\'{e}zout's lemma, there exist integers $\alpha$, $\beta$, $\gamma$ such that
\begin{align*}
\alpha \mdet{b & n \\ c & r} - \beta \mdet{a & m \\ c & r} + \gamma \mdet{a & m \\ b & n} = 1, \textrm{ or }
\mdet{
a & m & \alpha \\
b & n & \beta \\
c & r & \gamma} = 1
\end{align*} 

Define $v_3\coloneqq (\alpha, \beta, \gamma)$. The matrix $(v_1^\top, v_2^\top, v_3^\top)$ belongs to $\operatorname{GL}_3(\ZZ)$. Hence there is a homeomorphism $h$ mapping $(1,0,0)$, $(0,1,0)$, $(0,0,1)$ to $v_1$, $v_2$, $v_3$ respectively. Since the parametrization of the torus spanned by $(1,0,0)$ and $(0,1,0)$ is an embedding in $\TT^3$, its image set under $h$ is also a torus embedded in $\TT^3$. Hence, $T_e$ represents an embedded plane torus in $\TT^3$.   
\end{proof}

\begin{lemma} \label{Lem:PlaneTorusIsEssential}
Let $L$ be a link embedded in the $3$-torus $\TT^3$. 
If $T$ is a plane torus embedded in $\TT^3 \setminus L$, then $T$ is an essential torus in $\TT^3 \setminus L$. 
\end{lemma}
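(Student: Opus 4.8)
The plan is to verify the two conditions in the definition of an essential torus: that $T$ is incompressible in $\TT^3\setminus L$ and that $T$ is not boundary-parallel. Everything will follow from the single fact that the inclusion-induced homomorphism $\iota_*\from\pi_1(T)\to\pi_1(\TT^3\setminus L)$ is injective. Indeed, once this is known, $T$ is incompressible: the boundary of any compression disk would be an essential simple closed curve of $T$ that is null-homotopic in $\TT^3\setminus L$, which is impossible. So I would prove this injectivity first, and in fact prove the stronger statement that $\pi_1(T)$ injects into $\pi_1(\TT^3)$.

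To see this, pass to the universal cover $\calP\from\RR^3\to\TT^3$. By \refdef{PlaneTori} we may write $T=\calP(\Pi_0)$ for a Euclidean plane $\Pi_0$, and then $\calP^{-1}(T)=\bigcup_{g\in\ZZ^3}(\Pi_0+g)$ is a union of parallel planes, so each of its connected components is a Euclidean plane $\Pi$ and hence simply connected. Therefore $\calP|_\Pi\from\Pi\to T$ is a universal covering, and by the standard dictionary between covering spaces and subgroups, $\pi_1(T)$ is identified with the group of deck transformations of $\RR^3\to\TT^3$ that preserve $\Pi$, namely the subgroup $\{g\in\ZZ^3:\Pi+g=\Pi\}$ of $\ZZ^3=\pi_1(\TT^3)$; with this identification the composite $\pi_1(T)\to\pi_1(\TT^3\setminus L)\to\pi_1(\TT^3)$ is precisely the inclusion of that subgroup. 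Being injective, this composite forces $\iota_*\from\pi_1(T)\to\pi_1(\TT^3\setminus L)$ to be injective, so $T$ is incompressible in $\TT^3\setminus L$.

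For the boundary-parallel condition I would pass to the compact manifold $M\coloneqq\TT^3\setminus N(L)$, whose boundary is a disjoint union of tori indexed by the components of $L$, and into which $T$ can be isotoped. Suppose for contradiction that $T$ is parallel to the boundary torus $\partial\overline{N(R_i)}$. Then inside $M$ the torus $T$ cobounds a product region $W\cong\TT^2\times[0,1]$ with $\partial\overline{N(R_i)}$, and $V\coloneqq W\cup\overline{N(R_i)}$ is a solid torus embedded in $\TT^3$ with $\partial V=T$. A meridian $\mu$ of $V$ is an essential simple closed curve on $T=\partial V$ that bounds a disk in $V\subseteq\TT^3$, so the class of $\mu$ is nontrivial in $\pi_1(T)$ but trivial in $\pi_1(\TT^3)$ --- contradicting the injectivity of $\pi_1(T)\to\pi_1(\TT^3)$ established above. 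Hence $T$ is not boundary-parallel, and combining this with incompressibility, $T$ is an essential torus in $\TT^3\setminus L$.

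The only step requiring care is the covering-space identification of $\pi_1(T)$ with a subgroup of $\pi_1(\TT^3)$; the rest is formal bookkeeping. One could instead argue incompressibility by producing compression disks directly from the planar lifts, and rule out boundary-parallelism by checking that a plane torus represents a nonzero (in fact primitive) class in $H_2(\TT^3)$ and so is non-separating in $\TT^3$; but the fundamental-group argument dispatches both conditions at once.
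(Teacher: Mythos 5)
Your proof is correct. For incompressibility it lands in the same place as the paper --- $\pi_1$-injectivity of $T$ into $\pi_1(\TT^3)$, hence into $\pi_1(\TT^3\setminus L)$ --- but you justify it via the covering-space dictionary (the lift of $T$ is a disjoint union of Euclidean planes, so $\pi_1(T)$ is identified with the rank-two stabilizer subgroup of $\ZZ^3$), whereas the paper argues more informally that two rods generating $\pi_1(T)$ represent distinct infinite-order generators of a subgroup of $\pi_1(\TT^3)$. Your version is the more carefully documented of the two. Where you genuinely diverge is in ruling out boundary-parallelism: the paper simply observes that $T$ is non-separating in $\TT^3\setminus L$ and hence cannot be boundary-parallel (a one-line homological fact), while you assume $T$ is parallel to a boundary torus $\partial\overline{N(L_i)}$, assemble the product region with the solid torus $\overline{N(L_i)}$ into a solid torus $V$ with $\partial V = T$, and note that the meridian of $V$ would be an essential curve on $T$ that dies in $\pi_1(\TT^3)$, contradicting the injectivity you already established. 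Your route is slightly longer but has the virtue of reusing the $\pi_1$-injectivity for both halves of the argument; the paper's non-separating observation is shorter but invokes a separate (homological) fact. The only cosmetic issue is that you write $R_i$ for a component of $L$, although the lemma is stated for an arbitrary link $L$, not only unions of rods; your argument uses nothing beyond $\overline{N(L_i)}$ being a solid torus, so this is harmless.
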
 

\begin{proof}
Since $T$ is a plane torus, any two rods that represent the two generators of $\pi_1(T) \cong \ZZ\times \ZZ$ also represent distinct generators of a subgroup of $\pi_1(\TT^3) \cong \ZZ\times\ZZ\times\ZZ$. The two rods thus represent distinct nontrivial elements in $\pi_1(\TT^3\setminus L)$ with infinite order, otherwise, the rods can be ambient isotoped to one another in $\TT^3$ or one of them can be homotoped to a point in $\TT^3$. Hence, $T$ is $\pi_1$-injective, and thus incompressible in $\TT^3$. 

As $T$ is non-separating in $\TT^3\setminus L$, the surface $T$ cannot be boundary-parallel in $\TT^3\setminus L$. Since $T$ is incompressible and not boundary-parallel, $T$ is essential in $\TT^3\setminus L$.  
\end{proof} 

\begin{remark}
An argument similar to the proof of \reflem{PlaneTorusIsEssential} can be used to show that any plane torus in $\TT^3$ is essential. 
\end{remark}

The following is Theorem 4.1 from \cite{HuiPurcell}. 

\begin{theorem} \label{Thm:MainSingleRod_TwoRods}
Let $R_1$ and $R_2$ be rod-shaped circles embedded in $\TT^3$.
\begin{itemize}
\item The $3$-manifold $\TT^3\setminus R_1$ is Seifert fibred. 

\item If $R_1$ and $R_2$ are parallel rods, then $\TT^3\setminus(R_1\cup R_2)$ is Seifert fibred.
\item If $R_1$ and $R_2$ are linearly independent rods, then $\TT^3\setminus(R_1\cup R_2)$ is toroidal.
\end{itemize}
Any complement of one or two rods in $\TT^3$ cannot admit a complete hyperbolic structure. 
\end{theorem}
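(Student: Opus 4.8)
The plan is to normalize the rods by homeomorphisms of $\TT^3$ and then exhibit a Seifert fibration or an essential torus explicitly. Recall that $\operatorname{MCG}(\TT^3) \cong \operatorname{GL}_3(\ZZ)$ acts on rods via the induced action on their primitive direction vectors. For the first bullet, I would first apply an element of $\operatorname{GL}_3(\ZZ)$ carrying the direction vector of $R_1$ (which has coprime coordinates by \reflem{characteriseSSC}) to $(1,0,0)$, and then a translation, so that $R_1 = \SS^1 \times \{\ast\}$ under a product identification $\TT^3 = \SS^1 \times \TT^2$ with the first factor the $x$-circle. Then $\TT^3 \setminus R_1 \cong \SS^1 \times (\TT^2 \setminus \{\ast\})$, the trivial circle bundle over a once-punctured torus, which is Seifert fibred. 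For the second bullet, the same homeomorphism and a translation put a pair of parallel rods in the form $\SS^1 \times \{\ast_1\}$ and $\SS^1 \times \{\ast_2\}$ with $\ast_1 \neq \ast_2$ in $\TT^2$ (distinct since the rods are disjoint), so $\TT^3 \setminus (R_1 \cup R_2) \cong \SS^1 \times (\TT^2 \setminus \{\ast_1, \ast_2\})$ is again a product, hence Seifert fibred.

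For the third bullet I would produce an essential plane torus disjoint from both rods. Let $v_1, v_2 \in \ZZ^3$ be the primitive direction vectors and set $\Pi_0 = \operatorname{span}_\RR(v_1, v_2)$, a rational $2$-plane; since $v_1, v_2 \in \Pi_0 \cap \ZZ^3$ this lattice has rank $2$, so $\Pi_0$ is spanned by two nonparallel integral vectors and, by \reflem{characterisePlaneTorusAsSet}, every affine plane parallel to $\Pi_0$ projects to a plane torus in $\TT^3$. Writing $R_i = \calP(p_i + \RR v_i)$ and choosing a primitive integral normal $n$ to $\Pi_0$, we have $\langle v_i, n \rangle = 0$, so the full preimage $\calP^{-1}(R_i) = p_i + \RR v_i + \ZZ^3$ is contained in the planes $\{x : \langle x, n \rangle \in \langle p_i, n\rangle + \ZZ\}$ (using $\langle \ZZ^3, n \rangle = \ZZ$, as $n$ is primitive). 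Hence for any $t_0$ avoiding the at most two residues $\langle p_1, n \rangle, \langle p_2, n\rangle$ modulo $1$, the set $T = \calP(\{x : \langle x, n\rangle = t_0\})$ is a plane torus embedded in $\TT^3 \setminus (R_1 \cup R_2)$, and \reflem{PlaneTorusIsEssential} shows $T$ is essential. Therefore $\TT^3 \setminus (R_1 \cup R_2)$ is toroidal.

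Finally, none of these manifolds admits a complete hyperbolic structure, since a manifold carrying one is atoroidal and not Seifert fibred: the regular fiber of a Seifert fibration gives an infinite-order central element of $\pi_1$ in the first two cases, and $T$ is an essential torus in the third. The step I expect to be the main obstacle is the third bullet. There one must produce a plane that projects to an \emph{embedded} torus \emph{and} that is disjoint from both rods simultaneously; what makes this work is that the two direction vectors span a common rational plane $\Pi_0$, so each rod lies in a single member of the circle's worth of parallel plane tori with direction $\Pi_0$, and almost every other member of that family misses both rods. Granting \reflem{PlaneTorusIsEssential}, the remaining pieces — the first two bullets and the deduction that Seifert fibred or toroidal manifolds are not hyperbolic — are routine from standard $3$-manifold theory.
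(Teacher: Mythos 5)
Your proposal is correct. Note that this paper does not prove the statement itself --- it is quoted as Theorem~4.1 of \cite{HuiPurcell} --- so there is no in-paper proof to compare against; but your argument (normalising a primitive direction vector to $(1,0,0)$ via $\operatorname{GL}_3(\ZZ)\cong\operatorname{MCG}(\TT^3)$ to exhibit the product Seifert fibrations, and in the linearly independent case choosing an affine plane parallel to $\operatorname{span}(v_1,v_2)$ that misses both preimage lattices of lines and invoking \reflem{characterisePlaneTorusAsSet} and \reflem{PlaneTorusIsEssential}) is exactly the toolkit this paper deploys elsewhere, e.g.\ in \refprop{HyperbolicImpliesLI} and in the toroidality part of \refthm{Main2}, and matches the construction in the cited source. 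The only point worth making explicit is that a primitive vector of $\ZZ^3$ extends to a $\ZZ$-basis, which justifies the normalising homeomorphism; everything else, including the non-hyperbolicity deductions from central elements and essential tori, is standard.
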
 

\section{Three or more arbitrary rods in the $3$-torus}\label{Sec:ThreeOrMoreRods} 

A result in \cite{HuiPurcell} states that each member of a $4$-rod family of links in the $3$-torus admits a complete hyperbolic structure. That paper used tools from links in the $3$-sphere to prove the result. In the following, we use a topological argument to show a more general result.  

\begin{theorem} \label{Thm:MainThreeOrMoreRods} 
Let $n$ be an integer greater than or equal to three. 
Suppose $R_1$, $R_2$, \ldots, $R_n$ are $n$ disjoint  rod-shaped circles embedded in $\TT^3$. 
The $3$-manifold $\TT^3\setminus (R_1\cup R_2 \cup \ldots \cup R_n)$ admits a complete hyperbolic structure if and only if both of the following conditions hold: 
\begin{enumerate}
    \item There exist three linearly independent rods in $\{R_1, R_2, \ldots, R_n\}$. 
    \item For each pair of distinct parallel rods $R_i$ and $R_j$, they are not linearly isotopic in $\TT^3\setminus ((R_1\cup R_2 \cup \ldots \cup R_n)\setminus (R_i \cup R_j))$.       
\end{enumerate} 
\end{theorem}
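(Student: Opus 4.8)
The plan is to prove the theorem by establishing the contrapositive for the "only if" direction and a direct geometrization argument for the "if" direction. Throughout, write $M = \TT^3 \setminus (R_1 \cup \dots \cup R_n)$. Since $M$ is a compact orientable $3$-manifold with toroidal boundary, by Thurston's geometrization for Haken manifolds (or the general geometrization theorem), $M$ admits a complete hyperbolic structure of finite volume if and only if $M$ is irreducible, atoroidal, and not Seifert fibred (equivalently, its interior is not a graph manifold and contains no essential torus or sphere, and $M$ is not one of the small exceptional cases). So the strategy is to translate each of the two numbered conditions into the presence or absence of an essential torus or a Seifert fibration.

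First I would dispense with the "only if" direction. If no three rods are linearly independent, then all the direction vectors $(a_i,b_i,c_i)$ lie in a common rational plane or a common line through the origin in $\RR^3$; I would show that in this case $M$ contains an essential plane torus. Concretely, if all rods are parallel to a common plane $\Pi$, pick an integral plane parallel to $\Pi$ but disjoint from all the rods (possible since the rods are finitely many parallel copies inside slabs); by \reflem{characterisePlaneTorusAsSet} this is a plane torus in $M$, and by \reflem{PlaneTorusIsEssential} it is essential, so $M$ is toroidal and hence not hyperbolic. (The linear case, all rods parallel, is already covered by \refthm{MainSingleRod_TwoRods} together with an easy extension, or reduces to a Seifert fibred complement.) For the failure of condition (2): if $R_i$ and $R_j$ are parallel and linearly isotopic in the complement of the remaining rods, then they cobound an embedded annulus $A$ there; the boundary of a neighborhood of $A \cup R_i \cup R_j$ is a torus in $M$, and I would argue it is essential — it is incompressible because the two core curves are nontrivial non-parallel (in $M$) primitive elements of $\pi_1$, and it is not boundary-parallel because it encloses two distinct cusps. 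So again $M$ is toroidal, hence not hyperbolic.

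For the "if" direction, assume both conditions hold and show $M$ is hyperbolic by ruling out the three obstructions. Irreducibility: $\TT^3$ is irreducible and removing rods (which are geodesics, hence $\pi_1$-injective) keeps it so — any essential sphere would bound a ball in $\TT^3$ meeting some rod, contradicting that rods are non-nullhomotopic. Not Seifert fibred: since three rods are linearly independent, $M$ has at least three cusps with non-commensurable slope data, and more directly one can use that a Seifert fibration would force all cusps to be "vertical" or "horizontal", incompatible with three independent directions; alternatively quote that the fibration of $\TT^3$ restricted to $M$ cannot be extended and no new one exists — this I would handle by a homological or $\pi_1$ argument. The heart of the matter, and the step I expect to be the main obstacle, is atoroidality: I must show that every essential torus $T \subset M$ leads to a contradiction. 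The idea is to put $T$ in a good position with respect to a product structure $\TT^3 = \TT^2 \times S^1$ chosen so that the $S^1$ direction is one of the rod directions, analyze the intersection of $T$ with the meridian disks or with the level tori, and use an innermost/outermost argument to reduce $T$ to either a plane torus (which forces all rods coplanar, violating condition (1)) or a torus cobounding an annulus between two parallel rods (violating condition (2)). Making the normal-form argument for $T$ precise — controlling how an essential torus can sit in a rod complement and showing the only configurations are the two forbidden ones — is where the real work lies, and I would expect to spend most of the proof there, likely by an induction on $n$ that peels off one rod at a time using \refthm{MainSingleRod_TwoRods} as the base case and a careful analysis of how $T$ meets $\partial \overline{N(R_n)}$.

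Finally, I would close the loop: combining irreducibility, atoroidality, and the absence of a Seifert fibration, geometrization gives that $M$ admits a complete hyperbolic structure, which (since $M$ has nonempty toroidal boundary and finite volume by Mostow–Prasad together with the Margulis lemma applied to the cusps) is unique. This completes both directions.
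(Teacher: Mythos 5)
Your ``only if'' direction is essentially complete and close to the paper's: when condition (1) fails the paper likewise produces a disjoint plane torus (\reflem{characterisePlaneTorusAsSet}, \reflem{PlaneTorusIsEssential}, \refprop{HyperbolicImpliesLI}), and when condition (2) fails the paper uses the annulus of the linear isotopy directly as an essential annulus (\refprop{HyperbolicImpliesNotLinIsotopic}) rather than your swallow--follow torus, though that torus construction does appear later in the proof of \refthm{Main2}. The problem is the ``if'' direction, where you have a plan but not a proof, in two places. First, atoroidality: you assert that every essential torus should reduce, after a normal-form and innermost-disc analysis with respect to a product structure and an induction on $n$, to either a plane torus or a torus cobounding an annulus between two parallel rods, and you explicitly defer ``the real work''. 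That reduction is precisely the hard content of the theorem and cannot be deferred; moreover the proposed induction on $n$ is not obviously well-founded, since deleting a rod can make a previously essential torus compressible or boundary-parallel, so the inductive hypothesis does not directly apply to the smaller complement. The paper's actual route (\reflem{ExistsCompressionDisc}, \refprop{LINIImpliesAtoroidal}) avoids normal form entirely: a non-plane essential torus has a generator nullhomotopic in $\TT^3$, hence compresses in $\TT^3$; surgering along the compression disc yields a sphere bounding a ball, a lift-to-$\RR^3$ argument rules out the ``knotted ball'' case, so the torus bounds a solid torus whose core is a rod direction, and incompressibility plus non-boundary-parallelism force two parallel rods inside it, violating condition (2). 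You would need to supply an argument of comparable substance.

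Second, you invoke geometrization in the form ``irreducible, atoroidal, not Seifert fibred implies hyperbolic'' and propose to rule out Seifert fibrations ``by a homological or $\pi_1$ argument'' that you never give; the remark about cusps being ``vertical or horizontal'' is not a proof. This is a genuine missing step rather than a routine one: the paper instead proves the compact manifold is anannular (\refprop{LINIImpliesAnannular}), which requires a multi-case analysis of essential annuli --- including annuli with both boundary curves on the \emph{same} rod, split further by the slope $(p,q)$ of the boundary curves, a configuration your outline never considers --- and only then applies Thurston's hyperbolization for Haken manifolds with the hypotheses irreducible, $\partial$-irreducible, atoroidal, and anannular. Whichever formulation of hyperbolization you prefer, something equivalent to that case analysis (or an actual proof that no Seifert fibration exists) must be written down; as it stands both load-bearing steps of the sufficiency direction are placeholders.
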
 

The remainder of this section will focus on the proof of \refthm{MainThreeOrMoreRods}. 

\subsection{Necessary conditions for hyperbolic rod complements} 
\begin{definition}
Let $R_i$ and $R_j$ be two disjoint parallel rods embedded in a $3$-manifold $M$ which is  $\TT^3$ or a link complement in $\TT^3$. 
The rods $R_i$ and $R_j$ are said to be \emph{linearly isotopic} in $M$ if and only if there exist parametrizations $\gamma_i, \gamma_j\from\SS^1\to \TT^3$ for $R_i, R_j$ and an isotopy $H\from \SS^1 \times [0,1] \to M$ such that $H_0 = \gamma_i$, $H_1 = \gamma_j$, and there exists a vector $\vec{v}\in\RR^3$ such that for each $s\in\SS^1$, for each $t\in [0,1]$, $H(s,t) = \gamma_i(s) + t\vec{v}$  (modulo $(1,1,1)$). We call $H$ a \emph{linear isotopy} between $R_i$ and $R_j$. 
\end{definition}

Let $n$ be an integer greater than or equal to three. 
Suppose $R_1$, $R_2$, $\ldots$, $R_n$ are $n$ disjoint  rod-shaped circles embedded in the $3$-torus $\TT^3$. Denote by $M$ the compact $3$-manifold $\TT^3\setminus N(R_1\cup R_2 \cup \ldots \cup R_n)$ obtained by removing disjoint tubular neighbourhoods of the rods from $\TT^3$. 

Note that if $\TT^3\setminus (R_1\cup R_2 \cup \ldots \cup R_n)$ admits a complete hyperbolic structure, then $M$ is atoroidal and anannular by Thurston's hyperbolization theorem. The following two propositions state the necessary conditions. 

\begin{proposition} \label{Prop:HyperbolicImpliesLI}
If the $3$-manifold $\TT^3\setminus (R_1\cup R_2 \cup \ldots \cup R_n)$ admits a complete hyperbolic structure, then there exist three linearly independent rods in $\{R_1, R_2,\ldots, R_n\}$. 
\end{proposition}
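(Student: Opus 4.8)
The plan is to prove the contrapositive: assuming that no three of $R_1,\dots,R_n$ are linearly independent, I will produce an essential torus in $\TT^3\setminus(R_1\cup\dots\cup R_n)$, so that this manifold is toroidal and hence, as already noted via Thurston's hyperbolization theorem, admits no complete hyperbolic structure.

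First I would set up notation. Each rod-shaped circle $R_i$ is parallel to a primitive integral vector $d_i\in\ZZ^3$ (the image of a line under $\calP$ is a circle only when the line's direction is rational), so we may write $R_i=\calP(x_i+\RR d_i)$ for some $x_i\in\RR^3$. The hypothesis that no three rods are linearly independent says precisely that $W_0:=\operatorname{span}_{\RR}\{d_1,\dots,d_n\}$ has dimension at most $2$. In either case I can choose a rational $2$-plane $W\subseteq\RR^3$ containing every $d_i$ and spanned by two nonparallel integral vectors: if $\dim W_0=2$, take $W=W_0$ (and note $W\cap\ZZ^3$ has rank $2$ and is a direct summand of $\ZZ^3$, since $\ZZ^3/(W\cap\ZZ^3)$ injects into $\RR^3/W$ and is therefore free); if $\dim W_0\le 1$, take $W$ to be the span of $d_1$ together with any integral vector not parallel to $d_1$. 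This single construction handles the ``all rods parallel'' case and the ``all rods coplanar'' case uniformly.

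The core step is to find a plane torus parallel to $W$ that avoids all the rods. Writing $\ZZ^3=(W\cap\ZZ^3)\oplus\ZZ v$ for a suitable integral $v\notin W$, the affine planes $sv+W$ for $s\in[0,1)$ project to plane tori $T_s:=\calP(sv+W)$, each of which is genuinely a plane torus by \reflem{characterisePlaneTorusAsSet}. Unwinding the covering map, since each $d_i\in W$ every lift of $R_i$ lies in an affine plane parallel to $W$, and a short computation identifies exactly when a lift of $T_s$ and a lift of $R_i$ coincide; this shows $T_s\cap R_i\ne\varnothing$ if and only if $s$ is congruent mod $1$ to a single value determined by $x_i$. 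Hence at most $n$ values of $s\in[0,1)$ are ``bad'', so I may pick $s$ with $T_s$ disjoint from $R_1\cup\dots\cup R_n$; for tubular neighbourhoods chosen small enough, $T_s$ then lies in $M$ as well.

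Finally, \reflem{PlaneTorusIsEssential} applies verbatim to $T_s$ inside $\TT^3\setminus(R_1\cup\dots\cup R_n)$: it is incompressible and, being non-separating, not boundary-parallel, hence essential. So the complement is toroidal and not hyperbolic, which is the contrapositive we wanted. The step I expect to be the main obstacle is the bookkeeping in the third paragraph --- carefully translating the geometric condition ``$T_s$ misses $R_i$'' into an arithmetic condition on $s$ through the covering map $\calP$, while making sure the translated plane $sv+W$ still projects to an honestly embedded plane torus (which is exactly what \reflem{characterisePlaneTorusAsSet} guarantees). Everything else is either the hypothesis unwound or a direct citation of \reflem{PlaneTorusIsEssential}.
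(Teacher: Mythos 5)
Your proposal is correct and follows essentially the same route as the paper: both arguments take a rational $2$-plane containing all rod directions, find a parallel translate whose projection is a plane torus disjoint from the rods (the paper via the positive minimal distance between the finitely many lift-planes per unit cube, you via counting the finitely many bad values of the translation parameter $s$), and then invoke \reflem{characterisePlaneTorusAsSet} and \reflem{PlaneTorusIsEssential} to contradict atoroidality.
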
 

\begin{proof}
Suppose not. Then there exists a $2$-dimensional linear subspace $P$ of $\RR^3$ such that $P$ is spanned by two nonparallel integral vectors and each rod $R_i$ is parallel to some vector $\vec{v_i} \in P$. 

Let $k\in\{1,2,\ldots,n\}$. Each connected component of $\mathcal{P}^{-1}(R_k)$ is a straight line  $L_k^{(j)}$ lying in a plane $\Pi_k^{(j)}$ parallel to $P$ in $\RR^3$.  Denote by $\{L_k^{(j)}: j\in J\}$ the set of all connected components of $\mathcal{P}^{-1}(R_k)$.  
By \reflem{characterisePlaneTorusAsSet}, $\mathcal{P}(\Pi_k^{(j)})$ is a torus $T_k$ embedded in $\TT^3$ for each $j\in J$.  Since each rod is an embedded circle that intersects each of the six faces of the unit cube representing $\TT^3$ finitely many times, the union of rods $R_1$, $R_2$, \ldots, $R_n$ intersect each of the faces of the unit cube finitely many times as well. Therefore, the pre-image set $\bigcup_{k=1}^n \mathcal{P}^{-1}(T_k)$ has finitely many planes that intersect each unit cube in $\RR^3$. Hence,
\[ \min\{\hspace{0.5mm} d(\Pi_k^{(j)}, \Pi_l^{(i)}): k,l\in\{1, 2,\ldots, n\} \textrm{ and } j, i\in J \textrm{ and } \Pi_k^{(j)}\neq \Pi_l^{(i)} \} > 0. \]

As the minimal distance between planes is nonzero, we can always find another plane $\tilde{\Pi}$ parallel to $P$ that is disjoint from all other planes $\Pi_k^{(j)}$ in $\RR^3$. By \reflem{PlaneTorusIsEssential}, the projection $\mathcal{P}(\tilde{\Pi})$ would be  
an essential torus in 
$\TT^3\setminus (R_1\cup R_2 \cup \ldots \cup R_n)$, contradicting the fact that $M$ is atoroidal. Therefore, there exist three linearly independent rods in $\{R_1, R_2, \ldots, R_n\}$. 
\end{proof}

\begin{remark}
Note that it is possible that $\Pi_k^{(j)} = \Pi_k^{(i)}$ for some distinct $j,i \in J$, that is, the plane $\Pi_k^{(j)}$ may contain more than one connected component  of $\mathcal{P}^{-1}(R_k)$. It is also possible that  $\Pi_k^{(j)} = \Pi_l^{(i)}$ for some distinct $k,l \in \{1,\ldots,n\}$ when $R_k$ and $R_l$ are parallel. 
\end{remark}



\begin{proposition} \label{Prop:HyperbolicImpliesNotLinIsotopic}
If the $3$-manifold $\TT^3\setminus (R_1\cup R_2 \cup \ldots \cup R_n)$ admits a complete hyperbolic structure, then for each pair of distinct parallel rods $R_i$ and $R_j$, there is no linear isotopy between $R_i$ and $R_j$ in $\TT^3\setminus ((R_1\cup R_2 \cup \ldots \cup R_n)\setminus (R_i \cup R_j))$. 
\end{proposition}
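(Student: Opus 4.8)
The plan is to argue by contrapositive: assume there is a linear isotopy $H$ between two distinct parallel rods $R_i$ and $R_j$ inside $N := \TT^3\setminus ((R_1\cup\cdots\cup R_n)\setminus(R_i\cup R_j))$, and produce an essential torus (or annulus) in $M = \TT^3\setminus N(R_1\cup\cdots\cup R_n)$, contradicting atoroidality/anannularity from Thurston's hyperbolization theorem. The linear isotopy $H\from\SS^1\times[0,1]\to N$ sweeps out an annulus $A = H(\SS^1\times[0,1])$ whose two boundary circles are the cores of $R_i$ and $R_j$. First I would perturb/truncate $A$ near its ends so that it becomes a properly embedded annulus in $M$ with one boundary component on $\bdy\overline{N(R_i)}$ and one on $\bdy\overline{N(R_j)}$; because $H$ is a genuine isotopy (hence $A$ is embedded away from the ends) this is routine, and the boundary curves are longitudes of the respective rod neighbourhoods. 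This annulus $A$ is essential in $M$: it is incompressible since its core maps to an infinite-order element of $\pi_1(M)$ (the rod direction is a nonzero primitive vector in $\pi_1(\TT^3)$, which injects), and it is not boundary-parallel since its two boundary circles lie on two different boundary tori of $M$.

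From the essential annulus $A$ one obtains an essential torus by the standard "double the annulus" construction: take $\bdy\overline{N(A)}$, i.e. the frontier in $M$ of a regular neighbourhood of $A\cup \bdy\overline{N(R_i)}\cup\bdy\overline{N(R_j)}$ — concretely, tube the two rod-neighbourhood boundary tori together along $A$ — which yields a torus $\hat T$ embedded in the interior of $M$. One then checks $\hat T$ is incompressible (a compressing disk would, after standard cut-and-paste with $A$, give either a compression of $A$ or a boundary-parallelism, both excluded) and not boundary-parallel in $M$ (it separates $R_i\cup R_j$ from the rest, and neither side is a collar of a single boundary component since each side contains a nontrivially embedded annulus/rod). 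Hence $\hat T$ is essential, contradicting atoroidality.

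Alternatively, and perhaps more cleanly, I can bypass the doubling and invoke anannularity directly: a hyperbolic $M$ with torus cusps is anannular in the sense that it admits no essential annulus, so the essential embedded annulus $A$ already gives the contradiction. I would state it this way to keep the argument short, referencing that $M$ being finite-volume hyperbolic forces it to be anannular (no essential annuli) as well as atoroidal.

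The main obstacle I anticipate is the embeddedness and essentiality bookkeeping for $A$ near the two ends: the isotopy $H$ is embedded on $\SS^1\times(0,1)$ but its restrictions $H_0,H_1$ land on the rod cores which have been drilled out, so I must carefully replace collar neighbourhoods of $\SS^1\times\{0\}$ and $\SS^1\times\{1\}$ by vertical annuli in $\bdy\overline{N(R_i)}$ and $\bdy\overline{N(R_j)}$ and verify the result is still properly embedded and that the new boundary curves are the correct longitudes — together with ruling out that $A$ could be boundary-parallel, which is exactly where the hypothesis "$R_i$ and $R_j$ are \emph{distinct}" (so $A$ has boundary on two genuinely different cusps) is used. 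Once $A$ is known to be an essential annulus, the contradiction with hyperbolicity is immediate.
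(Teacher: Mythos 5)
Your proposal is correct and, in its streamlined ``alternative'' form, is exactly the paper's proof: the linear isotopy yields a properly embedded annulus with longitudinal boundary on two distinct boundary tori, which is incompressible because its core represents an infinite-order element of $\pi_1(\TT^3)$ and not boundary-parallel because its ends lie on different cusps, contradicting anannularity from Thurston's hyperbolization theorem. The ``double the annulus'' detour in your first paragraph is unnecessary, as you yourself observe.
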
 

\begin{proof} 
Suppose not. The linear isotopy gives an annulus $A$ properly embedded in $M = \TT^3\setminus N(R_1\cup R_2 \cup \ldots \cup R_n)$ with $\partial A$ equal to the union of two disjoint $(0,1)$-curves in $\partial N(R_i)$ and $\partial N(R_j)$ respectively. Since the two components of $\partial A$ lie in distinct boundary components of $M$, the annulus $A$ cannot be boundary-parallel in $M$. 

Because $R_i$ and $R_j$ are parallel, they both represent the same element $(p,q,r) \in \ZZ\times\ZZ\times\ZZ \cong \pi_1(\TT^3)$ with infinite order. The generator of $\pi_1(A)$ also corresponds to this nontrivial element $(p,q,r)$. Then the annulus $A$ is $\pi_1$-injective, and thus incompressible in $M$. 

Therefore, $A$ would be an essential annulus properly embedded in $M$, contradicting the fact that $M$ is anannular.  
\end{proof} 

\subsection{Sufficient conditions for hyperbolic rod complements} \ 

In this subsection, we show that the necessary conditions stated in \refprop{HyperbolicImpliesLI} and \refprop{HyperbolicImpliesNotLinIsotopic} are also sufficient.

\begin{proposition} \label{Prop:NRodsIrredAndBIrred}
The complement of any (possibly empty) union of finitely many rods in the $3$-torus is irreducible and $\partial$-irreducible. 
\end{proposition}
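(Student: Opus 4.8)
The plan is to prove irreducibility and $\partial$-irreducibility together by pulling everything back to the universal cover $\RR^3$, where the complement of the lifted rods is easy to understand. Write $M = \TT^3 \setminus N(R_1 \cup \dots \cup R_n)$ and let $\widetilde{M} \subseteq \RR^3$ be the preimage $\calP^{-1}(M)$, i.e.\ $\RR^3$ with an open tubular neighbourhood of the infinite family of lifted lines removed. The covering $\calP$ restricts to a covering $\widetilde{M} \to M$.

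First I would establish irreducibility. Let $S$ be an embedded $2$-sphere in $M$. Since $S$ is simply connected it lifts to an embedded sphere $\widetilde{S}$ in $\widetilde{M} \subseteq \RR^3$. Now $\widetilde{S}$ bounds a ball $B$ in $\RR^3$ (by the Schoenflies theorem applied in $\RR^3$, or because $\RR^3$ is irreducible). The key point is that $B$ must be disjoint from all the lifted rod-neighbourhoods: each lifted rod is a properly embedded line, hence a closed noncompact set, so if $B$ met a rod-neighbourhood it would have to contain an entire line (a line cannot enter and exit a ball while staying a closed properly embedded set in $\RR^3$ — more carefully, a line that meets the open ball $\mathrm{int}\,B$ must also meet the complement of $\overline{B}$, and by connectedness it meets $\widetilde{S}$, contradicting $\widetilde{S} \subseteq \widetilde{M}$). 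Hence $B \subseteq \widetilde{M}$, and $B$ projects to a ball in $M$ bounded by $S$ provided the projection is injective on $B$. To see injectivity, note $B$ is compact and the deck group acts by the lattice $\ZZ^3$; if two points of $B$ differed by a nonzero lattice vector, $B$ would contain a translate $B + v$ meeting $B$, and iterating, translates of $\widetilde{S}$ would have to intersect — but distinct lifts of an embedded surface in a covering space are disjoint. So $\calP|_B$ is injective, $S$ bounds a ball in $M$, and $M$ is irreducible.

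For $\partial$-irreducibility, let $D$ be a properly embedded disk in $M$ with $\partial D \subseteq \partial M$; I must show $\partial D$ bounds a disk in $\partial M$. Each boundary component of $M$ is a torus $\partial \overline{N(R_i)}$, so it suffices to show $\partial D$ is inessential in that torus, equivalently that $\partial D$ is null-homotopic in $M$. Since $\partial D$ bounds the disk $D$ it is null-homotopic in $M$; if it were an essential curve on the torus $\partial \overline{N(R_i)}$, then because that torus is incompressible would give a contradiction — so the real content is to show each boundary torus $\partial\overline{N(R_i)}$ is incompressible in $M$. This follows from a $\pi_1$-injectivity argument analogous to \reflem{PlaneTorusIsEssential}: the two generators of $\pi_1(\partial \overline{N(R_i)})$ map to the meridian (which is nontrivial in $M$ because the rod is not contained in a ball — indeed the meridian links $R_i$, detectable after filling) and the longitude (which maps to the infinite-order element $(a_i,b_i,c_i) \in \ZZ^3 \cong \pi_1(\TT^3)$, nontrivial in $\pi_1(M)$ since it is nontrivial in the quotient $\pi_1(\TT^3)$). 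A short argument shows no nontrivial combination of these dies in $\pi_1(M)$; hence the boundary torus is incompressible and $M$ is $\partial$-irreducible.

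The main obstacle is the descent argument in the irreducibility proof — specifically, verifying that the Schoenflies ball $B$ in $\RR^3$ is genuinely disjoint from every lifted rod-neighbourhood and that $\calP$ is injective on $B$. The geometric intuition (a compact ball can't swallow an entire line, and can't contain two points in the same $\ZZ^3$-orbit without forcing self-intersections of lifts) is clear, but making it rigorous requires a little care with properness and with the disjointness of distinct lifts in a covering space. The $\partial$-irreducibility step is then comparatively routine given \reflem{PlaneTorusIsEssential}-style reasoning, modulo correctly identifying that the meridian of each rod is homotopically nontrivial in the complement.
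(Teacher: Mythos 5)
Your proposal is correct in outline and, like the paper, works in the universal cover via Alexander's theorem, but it differs in two places. For irreducibility, the paper takes a shorter route: it shows the cover $\RR^3\setminus(\hat{R}_1\cup\cdots\cup\hat{R}_n)$ is itself irreducible (a sphere there bounds a ball in $\RR^3$, and a bounded ball cannot contain an infinite line) and then simply cites the theorem that a space covered by an irreducible space is irreducible \cite[Proposition 1.6]{Hatcher:NotesBasic3MfldTop}. You instead carry out the descent by hand --- lifting the sphere, capping it in $\RR^3$, and pushing the ball back down --- which is a legitimate, self-contained alternative; your injectivity argument should be phrased as ``nested translates $B-kv\subseteq\operatorname{int}(B)$ force $B$ to be unbounded'' rather than ``translates of $\widetilde{S}$ intersect,'' and you should check that $B$ misses the open tubular neighbourhoods (not just the lines), but the same connectedness/unboundedness argument handles both. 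For $\partial$-irreducibility the routes genuinely diverge: you recast the statement as incompressibility of each boundary torus and reduce it to a $\pi_1$-injectivity claim, whereas the paper argues directly with the disc $D$, showing its boundary slope must be a meridian and then lifting to produce a sphere in $\RR^3$ meeting an infinite line transversely in one point, which is impossible. The one soft spot in your version is exactly the clause ``a short argument shows no nontrivial combination of these dies in $\pi_1(M)$'': the curves with nonzero longitudinal part are handled by $\pi_1(\TT^3)$ as you say, but the nontriviality of (powers of) the meridian is the real content, and ``detectable after filling'' is not yet an argument. It can be completed either by the paper's geometric lift-and-cap contradiction or by a linking-number computation in $H_1(\RR^3\setminus\hat{R}_i)\cong\ZZ$; with that supplied, your proof goes through.
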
 

\begin{proof}
Let $R_1$, $R_2$, \ldots, $R_n$ be rod-shaped circles in $\TT^3$ for some positive integer $n$. Let $\hat{R_i}=\mathcal{P}^{-1}(R_i)$ for each $i\in [1,n]\cap\ZZ$. Denote by 
\[\mathcal{P}\from \RR^3 \setminus (\hat{R_1} \cup \hat{R_2} \cup \cdots \cup \hat{R_n}) \to \TT^3 \setminus(R_1\cup R_2 \cup \cdots \cup R_n)\]  
the covering map $\mathcal{P}(x,y,z) \coloneqq ([x-\lfloor x \rfloor], [y - \lfloor y \rfloor], [z - \lfloor z \rfloor])$. \vspace{1mm}

Suppose there were an essential sphere $S$ in $\mathbb{R}^3 \setminus (\hat{R_1} \cup \hat{R_2} \cup \cdots \cup \hat{R_n})$.  By Alexander's theorem \cite[Theorem 1.1]{Hatcher:NotesBasic3MfldTop}, $S$ bounds an embedded $3$-ball $B$ in $\mathbb{R}^3$. Since $S$ is essential, the interior of the $3$-ball $B$ will contain a straight line $L \subseteq (\hat{R_1} \cup \hat{R_2} \cup \cdots \cup \hat{R_n})$, contradicting the fact that $B$ is bounded. Therefore, $\mathbb{R}^3 \setminus (\hat{R_1} \cup \hat{R_2} \cup \cdots \cup \hat{R_n})$ is irreducible. Since spaces covered by irreducible spaces are irreducible  \cite[Proposition 1.6]{Hatcher:NotesBasic3MfldTop}, $\TT^3 \setminus(R_1\cup R_2 \cup \cdots \cup R_n)$ is irreducible. 

Suppose there were a disc $D$ properly embedded in $M\coloneqq \TT^3\setminus N(R_1\cup R_2 \cup \cdots \cup R_n)$ such that $\partial D$ does not bound a disc in $\partial M$. As $\partial D$ is a connected set and does not bound a disc in $\partial M$, $\partial D$ would be a non-trivial $(p,q)$-curve in $\partial N(R_i)$ for some $i\in\{1,2,\ldots,n\}$. Note also that $R_i$ is a rod-shaped circle in $\TT^3$; it represents a non-trivial loop class in $\pi_1(\TT^3) \cong \mathbb{Z}\times\mathbb{Z}\times\mathbb{Z}$. Thus any $(p,q)$-curve of $\partial N(R_i)$ with $q\neq 0$ cannot bound a disc in $\TT^3\setminus N(R_1\cup R_2\cup \cdots \cup R_n)$. 

Therefore, $\partial D$ would be a $(p,0)$-curve of $\partial N(R_i)$.  Since $\partial D$ is embedded in $\partial N(R_i)$, it has to be a $(1,0)$-curve (i.e., meridian) of $\partial N(R_i)$.  Note that each connected component of $\mathcal{P}^{-1}(D)$ is a disc $\tilde{D}$ with boundary equal to a meridional circle of an infinite straight line $\tilde{R}$ in $\RR^3$. The union of $\tilde{D}$ and a meridional disc of $N(\tilde{R})$ bounded by $\partial \tilde{D}$ is an embedded $2$-sphere in $\RR^3$ that intersects $\tilde{R}$ at exactly one point transversely.  This contradicts the fact that an embedded $2$-sphere in $\RR^3$ bounds a $3$-ball which cannot contain an infinite half line. 

Hence,  $M\coloneqq \TT^3\setminus N(R_1\cup R_2\cup \cdots \cup R_n)$ is $\partial$-irreducible.  
\end{proof} 

Next, we show that the two necessary conditions in \refprop{HyperbolicImpliesLI} and \refprop{HyperbolicImpliesNotLinIsotopic} suffice to guarantee that there are no essential tori nor essential annuli in the rod complement. 

\begin{lemma}
\label{Lem:ExistsCompressionDisc}
    If $T$ is an embedded torus in $\TT^3$ with a generator of $\pi_1(T) \cong \ZZ\times\ZZ$ represented by a loop that is homotopically trivial in $\TT^3$, then there exists a compression disc for $T$ in $\TT^3$. 
\end{lemma}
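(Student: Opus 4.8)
My plan is to build the compression disc explicitly by passing to the universal cover $\RR^3$ of $\TT^3$. Since some generator $\gamma$ of $\pi_1(T)\cong\ZZ\times\ZZ$ is null-homotopic in $\TT^3$, but $\pi_1(\TT^3)\cong\ZZ^3$ is torsion-free, the inclusion-induced map $\pi_1(T)\to\pi_1(\TT^3)$ has nontrivial kernel, and since $\ZZ^2$ modulo a nontrivial subgroup is either finite cyclic or $\ZZ$, the image of $\pi_1(T)$ in $\ZZ^3$ is cyclic (in fact infinite cyclic or trivial; trivial is impossible since an embedded torus in $\TT^3$ is non-separating only if\ldots — in any case the image is cyclic). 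Thus $\pi_1(T)$ does not inject, so $T$ is compressible in $\TT^3$ provided one can produce an \emph{embedded} compressing disc; the existence of \emph{some} essential simple closed curve on $T$ bounding a disc in $\TT^3$ then follows from the loop theorem.

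**Key steps, in order.**
First I would record the algebraic observation above: the map $\pi_1(T)\to\pi_1(\TT^3)\cong\ZZ^3$ is not injective, because $\gamma$ lies in its kernel and $\gamma$ is a primitive (or at least nontrivial) element of $\pi_1(T)\cong\ZZ^2$. Second, I would invoke the loop theorem / Dehn's lemma (in the form valid for $\partial$-manifolds, applied to $M=\TT^3\setminus N(\text{nothing})$, or directly to the pair $(\TT^3, T)$ cut along $T$): if the inclusion $T\hookrightarrow \TT^3$ is not $\pi_1$-injective on at least one side, there is an embedded disc $D\subseteq\TT^3$ with $D\cap T=\partial D$ an essential simple closed curve on $T$. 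The only subtlety is that the loop theorem needs the kernel to be nontrivial \emph{on one side} of the cut; since $T$ is two-sided (it is orientable in the orientable $\TT^3$) and $\pi_1(T)\to\pi_1(\TT^3)$ already fails to be injective, at least one of the two maps $\pi_1(T)\to\pi_1(\TT^3\setminus T\text{-component})$ must also fail to be injective — here I would use a small Mayer–Vietoris or van Kampen argument on $\TT^3 = A\cup_T B$ (or $\TT^3$ cut along $T$) to push the nontrivial kernel element to one side. Alternatively, and perhaps more cleanly, I would cut $\TT^3$ along $T$; if $T$ is non-separating the result is connected with two torus boundary components, if separating it has two pieces, and in either case the element $\gamma$, being null-homotopic in $\TT^3$, bounds a singular disc that can be made transverse to $T$ and then surgered, by an innermost-curve argument, to lie entirely in one complementary piece — yielding the hypothesis for Dehn's lemma there.

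**Main obstacle.**
The real work is the last step: upgrading "$\pi_1(T)\to\pi_1(\TT^3)$ is not injective" to "some complementary piece sees a nontrivial kernel," so that the loop/Dehn machinery applies and produces an \emph{embedded} disc. An even more hands-on alternative, which avoids the loop theorem entirely, is to exhibit the disc directly: lift $T$ to $\RR^3$; a lift $\widetilde T$ is an embedded surface whose fundamental group maps to the stabilizer lattice, and the hypothesis says the translation by the integer vector representing $\gamma$ fixes $\widetilde T$ setwise but the corresponding curve on $\widetilde T$ closes up after finitely many steps downstairs into a nullhomotopic loop — one then shows $\widetilde T$ is a properly embedded \emph{plane} (by irreducibility of $\RR^3$, via \refprop{NRodsIrredAndBIrred} with $n=0$ it separates $\RR^3$ into two pieces, one a halfspace-like region) and locates a compressing disc for a peripheral curve. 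I expect the cleanest writeup is the loop-theorem route, with the separating/non-separating case split handled by the innermost-disc surgery; the brace-counting here is trivial but the topological care in the non-injectivity-localizes-to-one-side step is where a referee will look hardest.
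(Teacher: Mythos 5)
Your argument is correct, but it takes a different route from the paper's. You reduce the lemma to the standard fact that a two-sided surface in a $3$-manifold is compressible whenever the inclusion is not $\pi_1$-injective: the hypothesis hands you a nontrivial element of $\ker(\pi_1(T)\to\pi_1(\TT^3))$, you make a singular disc bounded by $\gamma$ transverse to $T$ and surger along innermost circles to localize an essential curve of $T$ bounding a singular disc on one side, and then the loop theorem/Dehn's lemma produces the embedded compressing disc. You correctly flag the one genuine subtlety (pushing the kernel to one side of the cut), and your fix is the standard one, so the proof goes through. The paper instead works in the universal cover: it notes that a lift $\tilde T$ is either a torus or an infinite cylinder in $\RR^3$, handles the torus case with Dehn's lemma plus an Alexander-type argument, and in the cylinder case constructs the compressing disc concretely as an innermost disc of $\partial B_c\cap T$ for a ball $B_c$ containing the trivial loop, deriving a contradiction from connectivity of $\tilde T$ if no such disc existed. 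Your approach is shorter and leans on a textbook theorem that applies verbatim; the paper's is more constructive and tells you where the disc sits, though nothing downstream (e.g.\ \refprop{LINIImpliesAtoroidal}) uses more than bare existence, so either proof suffices. One cosmetic point: your parenthetical about the image of $\pi_1(T)$ in $\ZZ^3$ being cyclic or trivial is not needed and trails off; all you require is that the kernel is nontrivial, which is immediate since $\gamma$ is a nontrivial (indeed primitive) element of $\pi_1(T)\cong\ZZ\times\ZZ$.
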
 

\begin{proof}
Suppose $\gamma_h\subset T$ is a homotopically trivial loop in $\TT^3$ such that $[\gamma_h]$ is a generator of $\pi_1(T)$. Note that any lift $\tilde{T}$ of $T$ is a covering space of $T$ corresponding to the kernel of the induced map $\pi_1(T)\to\pi_1(\TT^3)$ (see \cite[Item~15 on p.80]{Hatcher:AlgTop}), which is a nontrivial subgroup of $\pi_1(T)$ because of the assumption on $\gamma_h$. It follows from the Galois correspondence \cite[Theorem 1.38]{Hatcher:AlgTop} and the classification of possibly punctured surfaces that any lift $\tilde{T}$ of $T$ is a torus or an infinite cylinder embedded in $\RR^3$. 

Case 1: Suppose $\tilde{T}$ is a torus. Using an argument similar to the proof for Alexander's theorem \cite[Theorem 1.1]{Hatcher:NotesBasic3MfldTop}, we can show that the torus $\tilde{T}$ bounds a (possibly once-punctured) solid torus $V$ in $\RR^3$ (see \cite[Item 3 on p.9]{Hatcher:NotesBasic3MfldTop}). By the definition of the covering map $\calP \from \RR^3\to \TT^3$, we can pick a meridional disc $\tilde{D}$ in $V\setminus \calP^{-1}(N(T))$ such that $\calP|_{\tilde{D}}\from \tilde{D}\to\TT^3\setminus N(T)$ is a map with no singularities on $\partial\tilde{D}$. By Dehn's lemma \cite{Rolfsen:KnotsAndLinks}, there exists an embedding $g\from \tilde{D}\to \TT^3\setminus N(T)$ with $g(\partial\tilde{D})=\calP|_{\tilde{D}}(\partial\tilde{D})$. Hence there exists a compression disc for $T$ in $\TT^3$. 

Case 2: Suppose $\tilde{T}$ is an infinite cylinder.  We can find a closed $3$-ball $B_c$ embedded in $\TT^3$ such that $\gamma_h$ is a subset of the interior of $B_c$. Observe that we can isotope the $3$-ball $B_c$ such that $\partial B_c$ intersects $T$ transversely and $\partial B_c\cap T$ is a collection of disjoint circles $\{C_i\}$. By a consequence of Schoenflies theorem \cite{Rolfsen:KnotsAndLinks}, each $C_i$ bounds a disc $D(C_i,\partial B_c)$ in $\partial B_c$. 

Suppose it were true that each circle $C_i$ bounds a disc $D(C_i,T)$ in $T$. We first consider the circles $C_i$'s with $D(C_i,T)$'s not containing any other $C_j$'s (i.e. consider the innermost circles). By irreducibility of $\TT^3$, the embedded spheres $D(C_i,\partial B_c) \cup D(C_i,T)$'s each bounds a $3$-ball in $\TT^3$. Thus, we could ambient-isotope $T$ (by deforming $D(C_i,T)$ to $D(C_i,\partial B_c)$ in the $3$-ball and then pushing $D(C_i,T)$ further away from $D(C_i,\partial B_c)$) such that all these innermost circles $C_i$'s disappear in $\partial B_c$. We repeat the process for each set of innermost circles consecutively until all $C_i$'s disappear. 

As $\gamma_h$ is homotopically nontrivial in $T$, it cannot lie in any disc $D(C_i,T)$, thus it cannot be pushed away from $B_c$. However, the deformed $T$ would be disjoint from $\partial B_c$, implying that $\tilde{T}$ would be either disconnected or bounded within the lift $\tilde{B_c}$ of $B_c$, a contradiction. Hence, there exists a loop $C_k$ embedded in $T$ that bounds a compression disc $D(C_k,\partial B_c)$ for $T$ in $\TT^3$.

In either case, we obtain a compression disc for $T$ in $\TT^3$.    
\end{proof}

\begin{proposition} \label{Prop:LINIImpliesAtoroidal}
Suppose there exist three linearly independent rods in $\{R_1$, $R_2$, \ldots , $R_n\}$ and for each pair of distinct parallel rods $R_i$ and $R_j$, they are not linearly isotopic to each other in $\TT^3\setminus ((R_1\cup R_2 \cup \ldots \cup R_n)\setminus (R_i \cup R_j))$.  Then the $3$-manifold $\TT^3\setminus (R_1\cup R_2 \cup \ldots \cup R_n)$ is atoroidal. 
\end{proposition}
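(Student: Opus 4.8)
The plan is to argue by contradiction. Write $L = R_1 \cup R_2 \cup \cdots \cup R_n$ and $M = \TT^3 \setminus N(L)$, and suppose $M$ contains an essential torus $T$. By \refprop{NRodsIrredAndBIrred} the manifold $M$ is irreducible and $\partial$-irreducible, so ``essential'' means $T$ is incompressible and not boundary-parallel in $M$. I would split into two cases according to whether $T$ is incompressible or compressible when viewed as a torus in $\TT^3$, equivalently according to the rank of the image of $\pi_1(T)\cong\ZZ^2$ under the composition $\pi_1(T) \to \pi_1(M) \to \pi_1(\TT^3)\cong\ZZ^3$, whose first map is injective because $T$ is incompressible in $M$.

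\textbf{Case 1: $T$ is incompressible in $\TT^3$.} Here the composition above has rank $2$. Passing to the cover of $\TT^3$ corresponding to the subgroup $\pi_1(T)$, namely $\TT^2\times\RR$, the lift of $T$ is $\pi_1$-surjective and hence isotopic to a horizontal fibre; pushing down, $T$ is ambient isotopic in $\TT^3$ to a plane torus $P$ spanned by two linearly independent integral vectors $v_1,v_2$ (cf.\ \reflem{characterisePlaneTorusAsSet}). Carrying the rods along this ambient isotopy produces pairwise disjoint circles lying in $\TT^3\setminus P\cong P\times(0,1)$, whose fundamental group injects into $\pi_1(\TT^3)$ with image the rank-two sublattice $\ZZ v_1\oplus\ZZ v_2$. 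Since an isotopy preserves free homotopy classes, the direction vector of every $R_i$ lies in $\mathrm{span}_{\ZZ}(v_1,v_2)$, so all rods are parallel to a common plane, contradicting hypothesis~(1).

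\textbf{Case 2: $T$ is compressible in $\TT^3$.} Then some generator of $\pi_1(T)$ is null-homotopic in $\TT^3$, so by \reflem{ExistsCompressionDisc} the torus $T$ bounds a compression disc $D$ in $\TT^3$, which I take in general position with respect to $L$, so that $D\cap L$ is a finite set of interior points. Thickening $T\cup D$, one boundary component of $N(T\cup D)$ is a copy of $T$ and the other is a sphere $S$, which bounds a ball $B$ in $\TT^3$ by irreducibility. If $B\supseteq N(T\cup D)$, then $T$ lies in the ball $B$ and bounds a solid torus $V$ inside it; no rod lies in $V$ (a rod is homotopically essential in $\TT^3$, while $V\subset B$), so a meridian disc of $V$ compresses $T$ in $M$, a contradiction. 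Otherwise $W\coloneqq N(T\cup D)\cup B$ is a solid torus with $\partial W=T$ in which $D$ is a meridian disc. As $T$ is incompressible in $M$, $D$ meets $L$, so $W$ contains at least one rod; any rod inside $W$ has nonzero winding number (else it would be null-homotopic in $W$, hence in $\TT^3$), so all rods inside $W$ represent nonzero multiples of the core class of $W$ and are therefore pairwise parallel. If $W$ contains exactly one rod $R_i$, then $T$ is incompressible in the Seifert fibred manifold $\TT^3\setminus N(R_i)$ of \refthm{MainSingleRod_TwoRods} (every compressing disc of $T$ in $\TT^3$ is a meridian disc of $W$ and meets $R_i$), so $T$ is a vertical torus there; the non-boundary-parallel alternative is incompatible with $T$ bounding $W$, so $T$ is parallel to $\partial N(R_i)$ through $W\setminus N(R_i)$, contradicting that $T$ is not boundary-parallel in $M$. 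If $W$ contains at least two rods, then two of these pairwise parallel rods are linearly isotopic, within $W$, in the complement of the remaining rods, contradicting hypothesis~(2).

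I expect the main obstacle to be the final step of Case~2: producing an honest \emph{linear} isotopy between two of the parallel rods inside $W$ that avoids every other rod. I would handle this by lifting to $\RR^3$ and quotienting by translation along the common direction $v$ of the rods inside $W$, which turns those rods into finitely many marked points in a planar region (the image of $W$), with the remaining rods mapping to points outside that region. A straight arc joining a closest pair of interior marked points lifts to a straight segment lying in the region, and its preimage is a flat annulus between the two corresponding rods that is disjoint from all other rods; this yields the desired linear isotopy. Verifying that the closest-pair choice keeps the lifted segment inside the region, and that the resulting annulus genuinely avoids the other rods, is the delicate point; once the solid torus $W$ is available, the remaining bookkeeping in Case~2 is routine.
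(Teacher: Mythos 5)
Your overall architecture (split on whether $T$ is incompressible in $\TT^3$; in the compressible case compress, get a sphere, and split on which side the ball lies) is the same as the paper's, and your Case 1 and your ``otherwise'' subcase of Case 2 track the paper's argument closely (your Seifert-fibred detour for the one-rod-in-$W$ situation is, if anything, more careful than the paper's). But there is a genuine gap in the first subcase of Case 2. When $B\supseteq N(T\cup D)$, the component of $\TT^3\setminus T$ contained in $B$ is of the form $B'\setminus N(\tau)$ for a $1$-tangle $\tau$ (the arc dual to $D$), and this is a solid torus \emph{only if} $\tau$ is a trivial tangle. If $\tau$ is knotted, $B'\setminus N(\tau)$ has incompressible boundary and admits no meridian disc, so your compression ``from the inside'' does not exist; the solid torus theorem does not rescue you, because the guaranteed solid torus side of $T$ in $S^3$ is then the outside $N(\tau)\cup(S^3\setminus B')$, which is not a region of $\TT^3$. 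The paper confronts exactly this configuration: it observes that no rod can lie in the bounded region $B'\setminus N(\tau)$, concludes the tangle would have to be nontrivial for $T$ to be incompressible from that side, and then rules out the knotted case by arguing that the rods cannot meet every cross-sectional (meridian) disc of $N(\tau)$, producing a compression disc on the \emph{tube} side. You need some argument of this kind; asserting that $T$ bounds a solid torus inside $B$ skips it.

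A secondary issue: in your sketch of the linear isotopy for the two-or-more-rods subcase, the claim that ``the remaining rods map to points outside that region'' under the quotient by translation along $v$ is only correct for rods parallel to $v$. A rod not parallel to $v$ projects to a closed geodesic, not a point, and since the quotient map is not injective that curve can pass through the image of $W$ even though the rod is disjoint from $W$; your straight arc between a closest pair of marked points may therefore cross it. (The paper asserts this linear-isotopy step without proof, so you are not worse off than the paper here, but your proposed repair as stated does not work.)
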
 

\begin{proof} 
Let $R_{\alpha}$, $R_{\beta}$, and $R_{\gamma}$ be three linearly independent rods in $\{R_1$, $R_2$, \ldots , $R_n\}$.  Suppose it were true that there exists an essential torus $T_e$ in $\TT^3\setminus (R_1\cup R_2 \cup \ldots \cup R_n)$. 

If $T_e$ were isotopic to a plane torus in $\TT^3$, $T_e$ would intersect at least one of the three rods $R_{\alpha}$, $R_{\beta}$, and $R_{\gamma}$, a contradiction. Thus, $T_e$ is not isotopic to any plane torus. Hence, at least one generator in $\pi_1(T_e)\cong\ZZ\times\ZZ$ is represented by an embedded loop $\gamma_h$ that is homotopically trivial in $\TT^3$. By \reflem{ExistsCompressionDisc}, there exists a compression disc $D_m$ for $T_e$ in $\TT^3$. 

We can then surger $T_e$ along $D_m$ to obtain a $2$-sphere $S$ embedded in $\TT^3$. By \refprop{NRodsIrredAndBIrred}, the $2$-sphere $S$ bounds a $3$-ball $B$ in $\TT^3$. 
We then have either $D_m\subseteq B$ or $D_m\cap B = \emptyset$. 

Suppose $D_m\subseteq B$. This implies that before the surgery, $T_e$ bounds a $3$-ball $B'$ with a tubular neighbourhood of a $1$-tangle $\tau$ removed. Note that each component of $\mathcal{P}^{-1}(B')$ is bounded, so no rod can be contained in $B'\setminus N(\tau)$.  Since $T_e$ is incompressible in $\TT^3\setminus (R_1\cup R_2 \cup \ldots \cup R_n)$, the $1$-tangle $\tau$ has to be nontrivial. However, it is not possible for any union of rods to intersect all cross sections of the tubular neighbourhood of the nontrivial $1$-tangle. Hence, $T_e$ would have a compression disc, contradicting the incompressibility of $T_e$. Therefore, we have $D_m\cap B = \emptyset$, which implies that $T_e$ bounds a solid torus $V$ before the surgery. 

\textbf{Case 1:} Suppose there exists another generator in $\pi_1(T_e)$ that is also represented by a homotopically trivial loop in $\TT^3$. It follows that any component $\tilde{T_e}$ of $\mathcal{P}^{-1}(T_e)$ is a bounded set in $\RR^3$, and exactly one component bounded by $\tilde{T_e}$ is a bounded set. 

Observe that the incompressibility of $T_e$ implies the existence of a rod in the interior of the bounded solid torus $V$, which in turn implies the existence of an infinite straight line in a bounded set bounded by $\tilde{T_e}$, a contradiction.

\textbf{Case 2:} Suppose there does not exist another generator in $\pi_1(T_e)$ that is represented by a homotopically trivial loop in $\TT^3$.  We then have another generator $[\gamma]$ in $\pi_1(T_e)$ that is nontrivial in $\pi_1(\TT^3)$; denote it by $(u,v,w)\in\ZZ^3 \cong \pi_1(\TT^3)$. 

As $T_e$ is incompressible in $\TT^3\setminus (R_1\cup R_2 \cup \ldots \cup R_n)$, the solid torus $V$ contains some rod $R_k$ which is parallel to $(u,v,w)$. Since $T_e$ is not boundary-parallel in $\TT^3\setminus (R_1\cup R_2 \cup \ldots \cup R_n)$, $V$ contains at least two rods parallel to $(u,v,w)$. But two such rods would be linearly isotopic to each other in the complement of all other rods in $\TT^3$, this contradicts the assumption. 
\end{proof}  

\begin{lemma}
\label{Lem:BoundsSolidTorus}
    Suppose $T$ is an embedded torus in $\TT^3$ and $\mu,\lambda \subset T$ represent two generators of $\pi_1(T)$. If $\mu$ is homotopically trivial and $\lambda$ is homotopically nontrivial in $\TT^3$, then $T$ bounds a solid torus in $\TT^3$.  
\end{lemma}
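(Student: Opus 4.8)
The plan is to lift to the universal cover $\RR^3$ and use the hypothesis on $\lambda$ to control the topology of a lift $\tilde T$ of $T$, then push the resulting ball/solid-torus structure back down. First I would invoke the Galois correspondence (as in the proof of \reflem{ExistsCompressionDisc}) to see that, since $[\mu]$ is a generator of $\pi_1(T)$ that dies in $\pi_1(\TT^3)$ while $[\lambda]$ survives with infinite order, the image of $\pi_1(T)\to\pi_1(\TT^3)\cong\ZZ^3$ is an infinite cyclic subgroup generated by the primitive class $(u,v,w)$ associated to $\lambda$. Hence any lift $\tilde T$ of $T$ to $\RR^3$ is an embedded torus or an infinite cylinder; because $[\lambda]$ has infinite order, $\tilde T$ cannot be compact, so $\tilde T$ is an infinite cylinder invariant under translation by $(u,v,w)$.

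Next I would produce a compression disc. By \reflem{ExistsCompressionDisc} (applicable since $\mu$ is homotopically trivial in $\TT^3$), there is a compression disc $D_m$ for $T$ in $\TT^3$, whose boundary, being a loop on $T$ that compresses, must be isotopic on $T$ to the homotopically trivial curve $\mu$ (it cannot be $\lambda$ or a curve with nonzero $\lambda$-coefficient, since those have infinite order in $\pi_1(\TT^3)$, hence cannot bound). Surgering $T$ along $D_m$ yields a $2$-sphere $S$, which bounds a $3$-ball $B$ in $\TT^3$ by irreducibility (\refprop{NRodsIrredAndBIrred} with the empty union, or directly the irreducibility of $\TT^3$). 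As in the proof of \refprop{LINIImpliesAtoroidal}, one checks that the case $D_m\subseteq B$ cannot occur: it would force $\tilde T$ to bound a bounded region, contradicting that $\tilde T$ is a noncompact cylinder; so $D_m\cap B=\emptyset$, and reversing the surgery exhibits $T$ as the boundary of a solid torus $V=B\cup(D_m\times I)$ in $\TT^3$. This is the structure claimed.

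The main obstacle I anticipate is the careful bookkeeping in the surgery step: one must be sure that the compressing curve is the $\mu$-curve (so that the surgered object really is a sphere and the reconstructed region $V$ really is a solid torus, not something with the wrong $\pi_1$), and one must rule out $D_m\subseteq B$ cleanly using the noncompactness of the cylinder lift $\tilde T$. I would phrase this ruling-out exactly as in \refprop{LINIImpliesAtoroidal}: if $D_m\subseteq B$ then $T$ bounds $B'\setminus N(\tau)$ for a $1$-tangle $\tau$, and lifting shows a component of $\calP^{-1}(B')$ is bounded while the corresponding lift of $T$ is the unbounded cylinder, which is impossible. A small additional point to handle is that the core of the solid torus $V$ is then a loop representing $\pm(u,v,w)$, which confirms consistency with $\lambda$ being homotopically nontrivial; this is automatic but worth a sentence. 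Everything else is a routine application of Alexander's theorem, Dehn's lemma, and irreducibility, all already cited in the excerpt.
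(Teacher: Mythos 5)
Your proposal is correct and follows essentially the same route as the paper: apply \reflem{ExistsCompressionDisc} to get a compression disc, surger to a sphere, invoke irreducibility to get a ball, and rule out the case $D\subseteq B$ using the fact that $\lambda$ is homotopically nontrivial (equivalently, that the lift of $T$ is an unbounded cylinder). The extra details you supply --- identifying the compressing slope with $\mu$ and making the lift argument explicit --- are consistent elaborations of the paper's terser wording.
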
 

\begin{proof}
    By \reflem{ExistsCompressionDisc}, there exists a compression disc $D$ for $T$ in $\TT^3$. Using an argument similar to the third and fourth paragraphs in the proof of \refprop{LINIImpliesAtoroidal}, we can show that $T$ bounds a solid torus in $\TT^3$.  
\end{proof}

\begin{proposition} \label{Prop:LINIImpliesAnannular}
Suppose there exist three linearly independent rods in $\{R_1$, $R_2$, $\ldots$, $R_n\}$ and for each pair of distinct parallel rods $R_i$ and $R_j$, they are not linearly isotopic to each other in $\TT^3\setminus ((R_1\cup R_2 \cup \ldots \cup R_n)\setminus (R_i \cup R_j))$. Then the $3$-manifold $\TT^3\setminus N(R_1\cup R_2 \cup \ldots \cup R_n)$ is anannular. 
\end{proposition}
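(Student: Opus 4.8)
The plan is to argue by contradiction, mirroring the proof of \refprop{LINIImpliesAtoroidal}. Suppose $M \coloneqq \TT^3 \setminus N(R_1 \cup \cdots \cup R_n)$ contains an essential annulus $A$. I would first determine the boundary slopes of $A$: by \refprop{NRodsIrredAndBIrred} the manifold $M$ is $\partial$-irreducible, and $A$ is incompressible, so each component of $\partial A$ is an essential $(p,q)$-curve on some boundary torus $\partial N(R_{k_i})$. The two components of $\partial A$ are freely homotopic through $A$, and a $(p,q)$-curve on $\partial N(R_k)$ represents $q$ times the primitive class of $R_k$ in $\pi_1(\TT^3)\cong\ZZ^3$; hence the two boundary classes agree up to sign. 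So either both boundary curves are meridians ($q = 0$), or both are non-meridional ($q\neq 0$), in which case $R_{k_0}$ and $R_{k_1}$ are parallel rods with equal longitudinal windings up to sign. Write $\vec u$ for their common primitive direction and $R_k \coloneqq R_{k_0}$.

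The meridional case I would rule out directly. Capping $A$ with the two meridian discs inside the respective $N(R_{k_i})$ gives an embedded $2$-sphere $S\subset\TT^3$, which bounds a ball $B$ by irreducibility of $\TT^3$. If $k_0\neq k_1$ then $S$ meets $R_{k_0}$ geometrically exactly once, which is impossible since $S$ is null-homologous. If $k_0 = k_1$, a short case analysis according to which side of $S$ each capping disc lies on shows that either an entire rod would be trapped inside a ball --- impossible, since every rod lifts to a straight line in $\RR^3$ and a line cannot lie in a ball --- or an arc of $R_{k_0}$ would be forced to cross $S$ transversally in its interior, contradicting the construction. Thus every essential annulus has non-meridional boundary on a pair (possibly a single one) of parallel rods.

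In the non-meridional case I would build a closed torus out of $A$. When both ends of $A$ lie on $\partial N(R_k)$, the two boundary curves cobound a sub-annulus $B_1\subset\partial N(R_k)$, and $T \coloneqq A\cup B_1$ is a torus; when the ends lie on distinct parallel rods $R_{k_0}, R_{k_1}$, one takes instead a parallel copy of $A$ together with the two sub-annuli it bounds on $\partial N(R_{k_0})$ and $\partial N(R_{k_1})$. Either way $T$ is a torus in $\TT^3 \setminus (R_1\cup\cdots\cup R_n)$ disjoint from every rod, so by \refprop{LINIImpliesAtoroidal} it is not essential: it is compressible or $\partial$-parallel. If $T$ is compressible, then --- just as in \refprop{LINIImpliesAtoroidal}, by surgering along a compressing disc, applying Alexander's theorem together with the irreducibility and $\partial$-irreducibility of \refprop{NRodsIrredAndBIrred}, and using that a union of straight rods can neither realise a nontrivial $1$-tangle nor be trapped in a ball --- one finds that $T$ bounds a solid torus $V\subseteq M$, which therefore contains no rod and whose core is a nonzero multiple of $\vec u$ in $\pi_1(\TT^3)$. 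Gluing $N(R_k)$ back onto $V$ along $B_1$ then produces a solid torus $V'$ whose boundary $\widehat T$ is the union of $A$ with the complementary sub-annulus $B_2\subset\partial N(R_k)$, and whose interior contains exactly the rod $R_k$; the distinct-rod case is handled in the same spirit.

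Finally, $\widehat T$ is again a torus in $\TT^3\setminus(R_1\cup\cdots\cup R_n)$, hence not essential by \refprop{LINIImpliesAtoroidal}; since the meridian disc of $V'$ meets $R_k$, the torus $\widehat T$ is incompressible from the $V'$ side, so it must be $\partial$-parallel in the rod complement. Comparing the image of $\pi_1(\widehat T)$ with that of $\pi_1(\partial N(R_j))$ in $\pi_1(\TT^3)$ shows that $\widehat T$ can only be parallel to the boundary torus of a rod parallel to $R_k$. If $\widehat T$ is parallel to $\partial N(R_k)$ itself, then unwinding the parallelism shows that $A$ is $\partial$-parallel in $M$, contradicting the essentiality of $A$. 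If $\widehat T$ is parallel to $\partial N(R_j)$ for some rod $R_j\parallel R_k$ with $j\neq k$, then $V'$ together with the parallelism region and $N(R_j)$ is a solid torus in the complement of the remaining rods containing the two parallel rods $R_k$ and $R_j$; as in \refprop{LINIImpliesAtoroidal}, these rods are then linearly isotopic in the complement of the other rods, contradicting the hypothesis. The case in which $T$ is $\partial$-parallel from the outset is handled the same way, only more directly. I expect the main obstacle to be the bookkeeping in these last two steps --- tracking which side of each torus the relevant solid-torus region lies on, relative to $N(R_k)$ and to the neighbourhoods of the other rods, and verifying cleanly that $\widehat T$ being parallel to $\partial N(R_k)$ forces $A$ to be $\partial$-parallel --- all of it running in close parallel to \refprop{LINIImpliesAtoroidal}. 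An alternative route, worth keeping in mind, is to exploit that $\TT^3\setminus N(R_k)$ (or $\TT^3\setminus N(R_{k_0}\cup R_{k_1})$ in the distinct-rod case) is Seifert fibred by \refthm{MainSingleRod_TwoRods} and to invoke the classification of essential annuli in Seifert fibred spaces.
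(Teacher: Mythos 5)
There is a genuine gap, and it sits at the load-bearing step of your construction. You never establish that the boundary curves of $A$ are $(p,q)$-curves with $|q|=1$; you only deduce $q\neq 0$. But the claim that ``gluing $N(R_k)$ back onto $V$ along $B_1$ produces a solid torus $V'$'' is exactly where $|q|=1$ is needed: the union of two solid tori along an annulus in their boundaries is a solid torus only if that annulus is longitudinal in at least one of them. The annulus $B_1$ is longitudinal in $N(R_k)$ only when $|q|=1$, and you have no control over how $\beta_k$ sits in $V$ --- a priori $\beta_k$ could wind $|q|$ times around the core of $V$ (so that the core of $V$ represents the primitive class $\vec u$ rather than $q\vec u$), in which case $V'$ is a Seifert fibred space over the disc with two exceptional fibres, not a solid torus, and the subsequent incompressibility and $\partial$-parallelism analysis of $\widehat T$ collapses. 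The paper avoids this by treating $|q|\geq 2$ with a separate mechanism (the torus $A_e\cup A^*$ carries an embedded essential loop in the non-primitive class $q\vec u$, hence is not isotopic to a plane torus, hence bounds a solid torus that cannot contain any rod-shaped circle, contradicting $\partial$-incompressibility of $A_e$), and in the two-boundary-component case it \emph{proves} that the boundary curves are $(0,1)$-curves by projecting the free homotopy between them onto a punctured plane torus perpendicular to the rods. Some version of this slope analysis must be added before your gluing step is legitimate.

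Two further points. First, your dismissal of the meridional case does not engage with the real danger: when both boundary curves are meridians of the same $R_k$, the capped-off sphere $S$ can meet $R_k$ in two points of opposite sign, bounding a ball $B$ with $B\cap R_k$ a properly embedded arc, and $A$ is essential precisely when that arc is knotted in $B$ --- this is how essential meridional annuli arise in composite knot exteriors in $\SS^3$. Excluding it needs an argument that a sub-arc of a straight rod cut off by a ball is unknotted in that ball (e.g.\ via the fact that the unknot has no knotted summand); ``which side of $S$ each capping disc lies on'' does not produce this. Second, your route entirely bypasses the paper's central two-rod argument --- lifting $A$ to $\RR^3$, taking the flat strip $S_p$ between the two parallel line lifts, and showing a lift of some other rod must cross $S_p$ transversely exactly once, which is incompatible with $A$ being embedded --- and replaces it with the assertion that two parallel rods cobounding the relevant solid torus are linearly isotopic in the complement of the other rods. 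That assertion is the nontrivial content connecting topology to the \emph{linear} isotopy hypothesis, and citing \refprop{LINIImpliesAtoroidal} for it inherits a debt rather than discharging one. (A smaller issue of the same kind: ``incompressible from the $V'$ side, hence $\partial$-parallel'' is a non sequitur; you must still exclude compressions of $\widehat T$ from the outside.)
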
 

\begin{proof} 
Suppose on the contrary that there exists an essential annulus $A_e$ in $M \coloneqq \TT^3\setminus N(R_1\cup R_2 \cup \ldots \cup R_n)$. 

Note that a component of $\partial A_e$ cannot be homotopically trivial in $\partial M$, otherwise, we may push a disc bounded by a component of $\partial A_e$ inwards to obtain a compression disc for $A_e$, contradicting the assumption that $A_e$ is incompressible. 

\textbf{Case 1:} Suppose $\partial A_e$ lies in two distinct boundary components  $\partial N(R_k)$ and $\partial N(R_l)$ of $M$.  By \cite[Lemma 5.12]{HuiPurcell}, $R_k$ and $R_l$ have to be parallel rods. 

Let $\beta_k$ be the $(k_1,k_2)$-curve in $\partial N(R_k)$ and $\beta_l$ be the $(l_1,l_2)$-curve in $\partial N(R_l)$ such that $\partial A_e = \beta_k\cup \beta_l$. Since $\beta_k$ is homotopic to $\beta_l$, their projections on a punctured plane torus perpendicular to $R_k$ are also homotopic, this implies $k_1=l_1=0$. It follows that we have $k_2=l_2=1$ as $\beta_k$ and $\beta_l$ are embedded in the tori. Hence, $\partial A_e$ is a union of two $(0,1)$-curves (i.e., longitudes) on $R_k$, $R_l$ respectively.  

Now consider a lift $\tilde{A_e}$ of $A_e$ under the covering map $\mathcal{P}\from \RR^3 \to \TT^3$. Observe that $\tilde{A_e}$ is an infinite strip with boundary components $\tilde{\beta_k}$, $\tilde{\beta_l}$ parallel to some straight-line lifts $\tilde{R_k}$, $\tilde{R_l}$ of $R_k$, $R_l$ respectively.  Let $S_p$ be the infinite plane strip 
between $\tilde{R_k}$ and $\tilde{R_l}$ in $\RR^3$. Observe that $\tilde{A_e}$, $S_p$ and strips on $\partial N(\tilde{R_k})$, $\partial N(\tilde{R_l})$ form an infinite cylinder (not necessarily embedded) in $\RR^3$. 

Since $R_k$ and $R_l$ are not linearly isotopic in the complement of other rods, the plane strip $S_p$ in the covering space $\RR^3$ must intersect the lift(s) of some other rod(s), otherwise, the projection $\calP(S_p)$ would give a linear isotopy between $R_k$ and $R_l$. 

If all lifts of rods that intersect $S_p$ were lying completely in $S_p$, then these lifts would have to be infinite straight lines parallel to $R_k$ and $R_l$, contradicting the assumption that any two parallel rods are not linearly isotopic. 
Hence, there exists a lift $\tilde{R_t}$ of a rod $R_t$ that intersects $S_p$ transversely.  

Observe that the infinite straight line $\tilde{R_t}$ intersects the infinite plane strip $S_p$ exactly once. The union of $\tilde{A_e}$ and $S_p$ forms an infinite cylinder, which projects to an immersed torus $T= A_e\cup\calP(S_p)$ in $\TT^3$ with one generator in $\pi_1(T)$ homotopically trivial in $\TT^3$ and another represented by the rod $R_k$.   

By irreducibility of $M$, we may isotope $A_e$ in $M$ such that $A_e\cap \calP(S_p)$ is a (possibly empty) union of circles that are homotopically nontrivial in $\calP(S_p)$ and $A_e\cap \calP(S_p) \cap R_t = \emptyset$. At the point where the rod $R_t$ intersects $\calP(S_p)$, $R_t$ is entering a region bounded by $T$ that is disconnected from the rest. Since $R_t$ is connected and intersects $\calP(S_p)$ exactly once, it must leave the region via $A_e$, contradicting the fact that $A_e$ is embedded in $M$. 

\textbf{Case 2:} Suppose $\partial A_e$ lies in exactly one boundary component $\partial N(R_k)$ of $M$.  The two components of $\partial A_e$ would be disjoint $(p,q)$-curves on $R_k$ for some $p, q \in \ZZ$ with  $\gcd(p,q)=1$. Observe that $\partial A_e$ separates $\partial N(R_k)$ into two annuli; denote them by $A^*$ and $A^{**}$. 

By \reflem{characteriseSSC}, $R_k$ is a $(u,v,w)$-rod associated with the parametrization stated in \refdef{Rodsss}(\ref{Def:uvwRod}) for some integral vector $(u,v,w)$ with $\gcd(u,v,w)=1$. Observe that $A_e\cup A^*$ and $A_e\cup A^{**}$ are tori embedded in $\TT^3\setminus(R_1\cup R_2\cup \ldots \cup R_n)$, and a generator of $\pi_1(A_e\cup A^*)\cong \ZZ\times\ZZ$ is represented by a $(qu,qv,qw)$-rod in $\TT^3$.

Suppose $|q|>1$.  The torus $A_e\cup A^*$ cannot be homotopic to a plane torus because $A_e\cup A^*$ is an embedded torus and its embedded $(qu,qv,qw)$-loop cannot be homotoped to an embedded loop in a plane torus. Thus, there exists a generator of $\pi_1(A_e\cup A^*)\cong \ZZ\times\ZZ$ that is trivial in $\pi_1(\TT^3)$. By \reflem{BoundsSolidTorus}, the torus $A_e\cup A^*$ bounds a solid torus $V$, which is a neighbourhood of a $(qu,qv,qw)$-loop, in $\TT^3$. Since $A_e$ is $\partial$-incompressible, there exists $\nu\in\{1,2,\ldots,n\}$ such that $R_{\nu}\subseteq V$, contradicting the fact that $R_{\nu}$ is a rod-shaped circle.

Suppose $|q|=1$. That is, $\partial A_e$ is a union of two disjoint $(p,1)$-curves on the $(u,v,w)$-rod $R_k$. We first consider the case when there exists another generator in $\pi_1(A_e\cup A^*)$ that is trivial in $\pi_1(\TT^3)$. By \reflem{BoundsSolidTorus}, the torus $A_e\cup A^*$ bounds a solid torus $V$, which is a neighbourhood of a $(u,v,w)$-loop, in $\TT^3$. Since $A_e$ is $\partial$-incompressible, there exists $\nu\in\{1,2,\ldots,n\}$ such that $R_{\nu}\subseteq V$. If $R_{\nu}\neq R_k$, this either contradicts \cite[Lemma 5.12]{HuiPurcell} or contradicts the pairwise-non-linear-isotopic condition. If $R_v=R_k$, we consider the torus $A_e\cup A^{**}$ and repeat an argument similar to the $R_{\nu}\neq R_k$ case above to come to a contradiction. 

Next, we consider the case when there does not exist any other generator in $\pi_1(A_e\cup A^*)$ that is trivial in $\pi_1(\TT^3)$. In other words, there exists another generator $[\xi]$ in $\pi_1(A_e\cup A^*)$ that is non-trivial in $\pi_1(\TT^3)$ and $[\xi] \neq (u,v,w)$ in $\pi_1(\TT^3)$. Let $(x,y,z)\in \ZZ\times\ZZ\times\ZZ\cong\pi_1(\TT^3)$ denote the homotopy class of loops that $\xi$ represents in $\TT^3$. Observe that $A_e\cup A^*$ is a non-separating torus in $\TT^3$, by an argument similar to the proof of \reflem{PlaneTorusIsEssential}, $A_e\cup A^*$ would be an essential torus.  This contradicts \refprop{LINIImpliesAtoroidal}. 

Suppose $|q|=0$. We have $|p|=1$, otherwise, $\partial A_e$ would not be embedded in $\partial N(R_k)$. Hence, $A_e\cup A^*$ bounds a solid torus $V$ in $\TT^3$. Since $A_e$ is not boundary-parallel in $M$, there exists $\sigma\in\{1,2,\ldots,n\}$ such that $R_{\sigma}$ is in the interior of $V$. This contradicts the boundedness of $V$. 

In any case, there would be contradiction. Hence, essential annulus does not exist in $M$. 
\end{proof} 

\begin{proof} [Proof of \refthm{MainThreeOrMoreRods}]
Let $n\in\ZZ$ be at least three. 
Suppose $R_1$, $R_2$, \ldots, $R_n$ are disjoint rod-shaped circles embedded in $\TT^3$. 

($\implies$): Suppose the $3$-manifold $M\coloneqq\TT^3\setminus (R_1\cup R_2 \cup \ldots \cup R_n)$ admits a complete hyperbolic structure. By \refprop{HyperbolicImpliesLI}, there exist three linearly independent rods in the set $\{R_1, R_2,\ldots, R_n\}$.  By \refprop{HyperbolicImpliesNotLinIsotopic}, no pair of distinct parallel rods $R_i$ and $R_j$ are linearly isotopic to each other in $\TT^3\setminus ((R_1\cup R_2 \cup \ldots \cup R_n)\setminus (R_i \cup R_j))$. 

($\impliedby$): 
Suppose there exist three linearly independent rods and for each pair of distinct parallel rods $R_i$ and $R_j$, they are not linearly isotopic to each other in $\TT^3\setminus ((R_1\cup R_2 \cup \ldots \cup R_n)\setminus (R_i \cup R_j))$. By \refprop{NRodsIrredAndBIrred}, the $3$-manifold $M$ is irreducible and $\partial$-irreducible. By \refprop{LINIImpliesAtoroidal}, $M$ is atoroidal. By \refprop{LINIImpliesAnannular}, $\TT^3\setminus N(R_1\cup R_2 \cup \ldots \cup R_n)$ is anannular. Therefore, by Thurston's hyperbolization theorem \cite{Thurston:3MKleinianGroupsHG}, $M$ admits a complete hyperbolic structure. 
\end{proof}

\section{Complete classification of rod complements in the $3$-torus}  \label{Sec:CompleteClassification}

With \refthm{MainThreeOrMoreRods} and the previous results in \cite{HuiPurcell}, all rod complements in the $3$-torus can be classified as hyperbolic or non-hyperbolic via some easy-to-check conditions. \refthm{Main2} states the conditions that characterise hyperbolic rod complements and those that characterise Seifert fibred ones.  

\begin{theorem}  \label{Thm:Main2}
Let $n$ be a positive integer. Suppose $R_1, R_2, \ldots, R_n$ are disjoint rod(s) in $\TT^3$. The $3$-manifold $\TT^3\setminus (R_1\cup R_2\cup \ldots\cup R_n)$ 

\begin{enumerate}
    \item \label{condition:hypIff} admits a complete hyperbolic structure if and only if there exist three linearly independent rods in $\{R_1, R_2, \ldots, R_n\}$ and for each pair of distinct parallel rods $R_i$ and $R_j$, they are not linearly isotopic in $\TT^3\setminus ((R_1\cup R_2 \cup \ldots \cup R_n)\setminus (R_i \cup R_j))$; and
    \item \label{condition:SFIff} is Seifert fibred\footnote{Abuse of terminology: 
    The statement ``$\TT^3\setminus (R_1\cup R_2\cup \ldots\cup R_n)$ is Seifert fibred'' means the compact manifold $\TT^3\setminus N(R_1\cup R_2\cup \ldots\cup R_n)$ is Seifert fibred.} if and only if $n=1$ or all rods are parallel; and 
    \item \label{condition:ToroidalIf} is toroidal if the following holds: 
    \begin{enumerate}
        \item all rod(s) span a line or plane; or 
        \item  there exist distinct integers $k,l\in\{1,2,\ldots,n\}$ such that $R_k$ and $R_l$ are linearly isotopic in $\TT^3\setminus ((R_1\cup R_2 \cup \ldots \cup R_n)\setminus (R_k \cup R_l))$. 
    \end{enumerate} 
\end{enumerate} 
\end{theorem}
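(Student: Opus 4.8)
The plan is to assemble \refthm{Main2} from the pieces already proved, treating each of its three parts separately and observing that they share almost all of their ingredients. Part \refitm{condition:hypIff} is literally \refthm{MainThreeOrMoreRods} when $n\geq 3$, so I would first dispatch the small cases $n=1$ and $n=2$: by \refthm{MainSingleRod_TwoRods} a one- or two-rod complement is never hyperbolic, and one checks directly that the stated condition also fails when $n\leq 2$ (with at most two rods there cannot be three linearly independent ones), so the ``if and only if'' holds vacuously on both sides. For $n\geq 3$, part \refitm{condition:hypIff} is exactly \refthm{MainThreeOrMoreRods}, so nothing new is needed.

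For part \refitm{condition:SFIff}, I would argue each direction. The ``if'' direction is immediate from \refthm{MainSingleRod_TwoRods}: a one-rod complement is Seifert fibred, and if all $n$ rods are parallel then I would extend the two-rod argument of \refthm{MainSingleRod_TwoRods} inductively — all rods are parallel to a common integral vector $(p,q,r)$, the Seifert fibration of $\TT^3$ by $(p,q,r)$-circles restricts to a Seifert fibration of the complement since each $R_i$ is a fibre (or can be arranged to be a union of fibres), and removing neighbourhoods of fibres from a Seifert fibred space yields a Seifert fibred space. For the ``only if'' direction, suppose $n\geq 2$ and the rods are not all parallel; then two of them, say $R_\alpha$ and $R_\beta$, are linearly independent. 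I would invoke \refthm{MainSingleRod_TwoRods} to say $\TT^3\setminus(R_\alpha\cup R_\beta)$ is toroidal, hence so is the smaller manifold $\TT^3\setminus(R_1\cup\cdots\cup R_n)$ after noting that an essential torus persists under further drilling — more carefully, one reuses the plane-torus construction: since $R_\alpha$ and $R_\beta$ span only a $2$-plane $P$ (if the rods were $3$-spanning we would instead need a different essential torus), a plane parallel to $P$ missing all rod lifts projects to an essential torus by \reflem{PlaneTorusIsEssential}; and if the rods are $3$-spanning but not all parallel, a Seifert fibred atoroidal-or-not dichotomy forces us to rule Seifert fibred out by other means. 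The cleanest route is: a Seifert fibred $3$-manifold with nonempty boundary that is irreducible and $\partial$-irreducible (which holds here by \refprop{NRodsIrredAndBIrred}) is either a torus bundle-type piece or contains an essential annulus (vertical in the fibration) unless it is very small; I would pin down that whenever two rods are linearly independent there is always an essential annulus or torus, contradicting the hyperbolicity/Seifert-fibred alternative, and conclude using the fact that the only Seifert fibred rod complements are those already identified. The honest way to present this is to show directly: if not all rods are parallel and $n\geq2$, then either the rods span a plane (essential plane torus exists) or they span all of $\RR^3$, in which case by part \refitm{condition:hypIff} the manifold is hyperbolic or toroidal — and a hyperbolic manifold is not Seifert fibred, while a toroidal one is not Seifert fibred (a Seifert fibred space over a hyperbolic base is atoroidal, and over a non-hyperbolic base the rod complement cases are excluded by finite volume / boundary considerations).

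For part \refitm{condition:ToroidalIf}, both sub-cases produce an explicit essential torus. If all rods span a line, every rod is parallel to a single integral vector $(p,q,r)$; a plane spanned by $(p,q,r)$ and a second integral vector, positioned to miss all rod lifts in $\RR^3$, projects to a plane torus disjoint from every rod, which is essential by \reflem{PlaneTorusIsEssential}. If all rods span a plane $P$ (but not a line), the same argument with a plane parallel to $P$ avoiding all lifts works, exactly as in the proof of \refprop{HyperbolicImpliesLI}; the minimal-distance-between-lifts estimate there guarantees such a parallel plane exists. For sub-case (b), a linear isotopy between $R_k$ and $R_l$ in the complement of the other rods sweeps out a properly embedded annulus $A$ with boundary two longitudes on $\partial N(R_k)$ and $\partial N(R_l)$; capping $A$ off with the annuli $A_e\cup A^{*}$-style argument from the proof of \refprop{LINIImpliesAnannular} (or more simply: $R_k$ and $R_l$ are parallel and cobound an annulus, so a slightly pushed-off copy of the ``doubled'' torus $A\cup(\text{strips on }\partial N(R_k),\partial N(R_l))$ gives an embedded torus), one gets an embedded torus in the complement that is incompressible (its core is a nontrivial infinite-order element of $\pi_1$) and not boundary-parallel (it separates $R_k$ and $R_l$ from nothing while containing both, or one checks non-boundary-parallelism directly since it bounds a solid torus containing two rods), hence essential.

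The main obstacle is part \refitm{condition:SFIff}'s ``only if'' direction: showing that having two linearly independent rods \emph{forces} the complement to fail to be Seifert fibred. The plane-torus trick only directly handles the case where the rods collectively span a $2$-plane; when they $3$-span, one must rule out Seifert fibred structure without an obvious essential torus (indeed a $3$-spanning configuration satisfying the conditions of part \refitm{condition:hypIff} is hyperbolic, hence not Seifert fibred, but one still has to handle the $3$-spanning configurations that are toroidal because of a linear isotopy). The resolution is to note that \refthm{MainThreeOrMoreRods} and \refthm{MainSingleRod_TwoRods} between them classify every rod complement as hyperbolic or toroidal (there is no ``small Seifert fibred'' exceptional fibre in these manifolds because they have toral boundary and infinite volume-type geometry forced by the $\ZZ^3$ fundamental group of $\TT^3$), and neither a hyperbolic manifold nor an irreducible toroidal $3$-manifold with incompressible torus boundary is Seifert fibred in a way compatible with the known fibrations — so I would state and use the dichotomy ``a rod complement is Seifert fibred $\iff$ it is neither hyperbolic nor toroidal'' together with parts \refitm{condition:hypIff} and \refitm{condition:ToroidalIf} to close the loop, with the one-rod and all-parallel cases verified by hand via \refthm{MainSingleRod_TwoRods}.
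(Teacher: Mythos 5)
Your treatment of parts \refitm{condition:hypIff} and \refitm{condition:ToroidalIf} matches the paper: part \refitm{condition:hypIff} is assembled from \refthm{MainThreeOrMoreRods} and \refthm{MainSingleRod_TwoRods}, and part \refitm{condition:ToroidalIf} is proved exactly as you describe, via a disjoint plane torus (essential by \reflem{PlaneTorusIsEssential}) in case (a) and by surgering $\partial N(R_k)\cup\partial N(R_l)$ along the linear-isotopy annulus in case (b). The ``if'' direction of part \refitm{condition:SFIff} (foliating the complement of parallel rods by circles parallel to the rods) also agrees with the paper.

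The genuine gap is in the ``only if'' direction of part \refitm{condition:SFIff}. Your closing dichotomy --- ``a rod complement is Seifert fibred if and only if it is neither hyperbolic nor toroidal'' --- is false, and the theorem itself supplies counterexamples: the complement of $n\geq 2$ parallel rods is Seifert fibred by part \refitm{condition:SFIff} and toroidal by part \refitm{condition:ToroidalIf}(a). More generally, ``toroidal implies not Seifert fibred'' fails badly (Seifert fibred spaces routinely contain vertical essential tori), so deducing ``not Seifert fibred'' from ``toroidal'' cannot work for the configurations you flag as problematic, namely rods that $3$-span but are toroidal because of a linear isotopy, or rods that span only a plane without all being parallel. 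Your alternative sketch (essential vertical annuli, ``small'' Seifert fibred exceptions) does not close this, since the presence of essential annuli or tori is compatible with being Seifert fibred. The paper's argument is different and avoids geometrization entirely: assuming a Seifert fibring $\mathcal{W}$ of $M$ and two linearly independent rods $R_p$, $R_q$, the incompressible boundary tori $\partial N(R_p)$ and $\partial N(R_q)$ must be unions of fibres of $\mathcal{W}$ (Jaco, VI.34); hence there are regular fibres $f_p\subseteq\partial N(R_p)$ and $f_q\subseteq\partial N(R_q)$, and any two regular fibres of a connected Seifert fibred space are freely homotopic. But homotopic essential curves on the boundaries of neighbourhoods of two rods force those rods to be parallel (Lemma 5.12 of Hui--Purcell), contradicting linear independence. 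You would need this fibre-structure argument, or something equivalent, to complete part \refitm{condition:SFIff}.
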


\begin{proof} 
Statement (\ref{condition:hypIff}) follows from \refthm{MainThreeOrMoreRods} and \refthm{MainSingleRod_TwoRods}. 

The if-direction of Statement (\ref{condition:SFIff}) follows from \refthm{MainSingleRod_TwoRods} and the fact that the complement of parallel rods in $\TT^3$ can be foliated by circles parallel to the rods.  

Suppose the $3$-manifold $M \coloneqq \TT^3\setminus N(R_1\cup R_2\cup \ldots\cup R_n)$ has a Seifert fibring $\mathcal{W}$. Further suppose it were true that $n\neq 1$ and not all rods are parallel. There would exist linearly independent rods $R_p$ and $R_q$ with $p, q\in \{1, \ldots, n\}$. The disjoint tori $\partial N(R_p)$ and $\partial N(R_q)$ are two-sided and incompressible in $M$, by \cite[VI.34]{Jaco:3MfdTop}, $\partial N(R_p)$ and $\partial N(R_q)$ consist of fibres of $\mathcal{W}$. There would then exist regular fibres $f_p$ and $f_q$ of $\mathcal{W}$ such that $f_p \subseteq \partial N(R_p)$ and $f_q \subseteq \partial N(R_q)$. Observe that $f_p$ and $f_q$ cannot be trivial loops in $M$. Two such regular fibres of the path-connected Seifert fibred space are homotopic, which contradicts \cite[Lemma 5.12]{HuiPurcell}. As a result, $n=1$ or all rods are parallel. 

Next, we show Statement (\ref{condition:ToroidalIf}). 

Suppose all rods span a line or plane. Since the number of rods is finite, we can find a plane torus that is disjoint from all the rods in $\TT^3$. By \reflem{PlaneTorusIsEssential}, such plane torus is essential in $M=\TT^3\setminus N(R_1\cup R_2\cup \ldots\cup R_n)$. 

Suppose there exist distinct integers $k,l\in\{1,2,\ldots,n\}$ such that $R_k$ and $R_l$ are linearly isotopic in the complement of the other rods. We can surger $\partial N(R_k) \cup \partial N(R_l)$ along the annulus obtained from the linear isotopy to get an embedded torus, which is incompressible and not boundary-parallel in $M$. 

In either case, the $3$-manifold $\TT^3\setminus (R_1\cup R_2\cup \ldots\cup R_n)$ is toroidal. 
\end{proof}


\section{Further discussion} \label{Sec:Application}
With the help of SnapPy \cite{SnapPy}, Hui and Purcell showed in \cite{HuiPurcell} that five of the six rod packings in \cite{OKeeffeEtAl:CubicRodPackings} each admits a complete hyperbolic structure. \refthm{MainThreeOrMoreRods} implies that all the six rod packings in \cite{OKeeffeEtAl:CubicRodPackings}, including the $8$-component $\Sigma^*$ rod packing structure omitted in \cite{HuiPurcell}, admit complete hyperbolic structures.  

\refthm{Main2} characterises all hyperbolic rod complements in $\TT^3$. An example is $\TT^3\setminus(R_x\cup R_y\cup R_z)$ shown in \cite[Figure 1]{HuiPurcell}. This hyperbolic rod complement is homeomorphic to the Borromean rings complement, and its Kleinian group can be generated by the holonomy isometries obtained from the octahedral decomposition in \cite[Figure 3]{HuiPurcell}.  Given any hyperbolic rod complements, we can input the link complements in $\TT^3$ into SnapPy \cite{SnapPy} using the method outlined in \cite{HuiPurcell}, and observe their geometries such as cusp shapes and volumes. We wonder if there are more direct ways to find hyperbolic structures of rod complements, without encoding it as a link in $\SS^3$. 

Observe that there are non-homeomorphic link complements in $\SS^3$ that share the same hyperbolic volume. It is interesting to ask whether there are non-homeomorphic rod complements in $\TT^3$ that have the same hyperbolic volume. Or would volume be a complete invariant for hyperbolic rod complements in $\TT^3$? 





\bibliographystyle{amsplain}  
\bibliography{ClassifyT3Rods_ref.bib}

\providecommand{\bysame}{\leavevmode\hbox to3em{\hrulefill}\thinspace}
\providecommand{\MR}{\relax\ifhmode\unskip\space\fi MR }
\providecommand{\MRhref}[2]{%
  \href{http://www.ams.org/mathscinet-getitem?mr=#1}{#2}
}
\providecommand{\href}[2]{#2}
\begin{thebibliography}{10}

\bibitem{AdamsEtAl:LinksInThickenedS}
C.~Adams, C.~Albors-Riera, B.~Haddock, Z.~Li, D.~Nishida, B.~Reinoso, and L.~Wang, \emph{Hyperbolicity of links in thickened surfaces}, Topology Appl. \textbf{256} (2019), 262--278. \MR{3916014}

\bibitem{AdamsEtAl:tgHypVirtualLinks}
Colin Adams, Or~Eisenberg, Jonah Greenberg, Kabir Kapoor, Zhen Liang, Kate O'Connor, Natalia Pacheco-Tallaj, and Yi~Wang, \emph{tg-{H}yperbolicity of virtual links}, J. Knot Theory Ramifications \textbf{28} (2019), no.~12, 1950080, 26. \MR{4059928}

\bibitem{Akimova-Matveev:VirtualKnots}
Alena~Andreevna Akimova and Sergei~Vladimirovich Matveev, \emph{Classification of genus 1 virtual knots having at most five classical crossings}, J. Knot Theory Ramifications \textbf{23} (2014), no.~6, 1450031, 19. \MR{3253963}

\bibitem{CKP:Biperiodic}
Abhijit Champanerkar, Ilya Kofman, and Jessica~S. Purcell, \emph{Geometry of biperiodic alternating links}, J. Lond. Math. Soc. (2) \textbf{99} (2019), no.~3, 807--830. \MR{3977891}

\bibitem{Cremaschi-RodriguezMigueles:HypOfLinkCpmInSFSpaces}
Tommaso Cremaschi and Jos\'{e}~Andr\'{e}s Rodr\'{\i}guez-Migueles, \emph{Hyperbolicity of link complements in {S}eifert-fibered spaces}, Algebr. Geom. Topol. \textbf{20} (2020), no.~7, 3561--3588. \MR{4194288}

\bibitem{Cremaschi-RodriguezMigueles-Yarmola:VolAndFillingMulticurves}
Tommaso Cremaschi, Jos\'{e}~Andr\'{e}s Rodr\'{\i}guez-Migueles, and Andrew Yarmola, \emph{On volumes and filling collections of multicurves}, J. Topol. \textbf{15} (2022), no.~3, 1107--1153. \MR{4442684}

\bibitem{SnapPy}
Marc Culler, Nathan~M. Dunfield, Matthias Goerner, and Jeffrey~R. Weeks, \emph{Snap{P}y, a computer program for studying the geometry and topology of $3$-manifolds}, Available at \url{http://snappy.computop.org}.

\bibitem{EH:SwollenCorneocytes}
Myfanwy~E. Evans and Stephen~T. Hyde, \emph{From three-dimensional weavings to swollen corneocytes}, Journal of The Royal Society Interface \textbf{8} (2011), no.~62, 1274--1280.

\bibitem{ERH:PeriodicEntanglementII}
Myfanwy~E. Evans, Vanessa Robins, and Stephen~T. Hyde, \emph{Periodic entanglement {II}: weavings from hyperbolic line patterns}, Acta Crystallogr. Sect. A \textbf{69} (2013), no.~3, 262--275. \MR{3047743}

\bibitem{Hatcher:AlgTop}
Allen Hatcher, \emph{Algebraic topology}, Cambridge University Press, Cambridge, 2002. \MR{1867354}

\bibitem{Hatcher:NotesBasic3MfldTop}
\bysame, \emph{Notes on basic 3-manifold topology}, Available at \url{https://pi.math.cornell.edu/~hatcher/3M/3M.pdf}, 2007.

\bibitem{Howie-Purcell:AltLinksOnSurfaces}
Joshua~A. Howie and Jessica~S. Purcell, \emph{Geometry of alternating links on surfaces}, Trans. Amer. Math. Soc. \textbf{373} (2020), no.~4, 2349--2397. \MR{4069222}

\bibitem{HuiPurcell}
Connie On~Yu Hui and Jessica~S. Purcell, \emph{On the geometry of rod packings in the 3-torus}, arXiv:2212.04662, 2022.

\bibitem{Jaco:3MfdTop}
William Jaco, \emph{Lectures on three-manifold topology}, no.~43, American Mathematical Soc., 1980.

\bibitem{OKeeffe-Andersson:RodPCrystalChem}
M.~O'Keeffe and Sten Andersson, \emph{Rod packings and crystal chemistry}, Acta Crystallographica Section A: Crystal Physics, Diffraction, Theoretical and General Crystallography \textbf{33} (1977), no.~6, 914--923.

\bibitem{OKeeffeEtAl:CubicRodPackings}
M.~O'Keeffe, J.~Pl\'{e}vert, Y.~Teshima, Y.~Watanabe, and T.~Ogama, \emph{The invariant cubic rod (cylinder) packings: symmetries and coordinates}, Acta Cryst. Sect. A \textbf{57} (2001), no.~1, 110--111. \MR{1805573}

\bibitem{Rolfsen:KnotsAndLinks}
Dale Rolfsen, \emph{Knots and links}, American Mathematical Soc., 2003.

\bibitem{RosiEtAl:RodPackingsMOF}
Nathaniel~L. Rosi, Jaheon Kim, Mohamed Eddaoudi, Banglin Chen, Michael O'Keeffe, and Omar~M. Yaghi, \emph{Rod {P}ackings and {M}etal-{O}rganic {F}rameworks {C}onstructed from {R}od-{S}haped {S}econdary {B}uilding {U}nits}, Journal of the American Chemical Society \textbf{127} (2005), no.~5, 1504--1518 (eng).

\bibitem{Thurston:3MKleinianGroupsHG}
William~P. Thurston, \emph{Three-dimensional manifolds, {K}leinian groups and hyperbolic geometry}, Bull. Amer. Math. Soc. (N.S.) \textbf{6} (1982), no.~3, 357--381. \MR{648524}

\end{thebibliography}

\end{document}